\newcounter{author}
\renewcommand*\author[1]{%
  \stepcounter{author}%
  \ifnum\c@author=1
    \gdef\@author{#1}%
  \else
    \xdef\@author{\unexpanded\expandafter{\@author\and#1}}%
  \fi
  \csgdef{author@\the\c@author}{#1}}
\newcommand*\email[1]{%
  \csgdef{email@\the\c@author}{#1}}
\newcommand*\address[1]{%
  \csgdef{address@\the\c@author}{#1}}
  \xdef\author@count{\the\c@author}%
\newcommand*\print@authors{%
  \ifnum\c@author>\author@count
  \else
    \print@author{\the\c@author}%
    \advance\c@author by 1
    \expandafter\print@authors
  \fi}
\newcommand*\print@author[1]{%
  \par\medskip
  
  \begin{tabular}{@{}l@{}}%
    \textsc{\csuse{author@#1}}\\[.25em]
    \begin{minipage}{\textwidth}\csuse{address@#1}
  \end{minipage}\\[.75em]
    \textit{E-mail}:
    \href{mailto:\csuse{email@#1}}{\csuse{email@#1}}
  \end{tabular}
  }
\newcommand{\po}{\left(}
\newcommand{\pf}{\right)}
\newcommand{\co}{\left[}
\newcommand{\cf}{\right]}
\newcommand{\cco}{\llbracket}
\newcommand{\ccf}{\rrbracket}
\newcommand{\R}{\mathbb R}
\newcommand{\N}{\mathbb N} 
\newcommand{\dd}{\text{d}}
\newcommand{\na}{\nabla}
\newcommand{\1}{\mathbbm{1}}
\newcommand{\bv}{\mathbf{v}}
\newcommand{\bg}{\mathbf{g}}
\newcommand{\bw}{\mathbf{w}}
\newcommand{\bh}{\mathbf{h}}
\newcommand{\red}[1]{{#1}}
\newtheorem{theorem}{Theorem}
\newtheorem{proposition}{Proposition}
\newtheorem{remark}{Remark}
\newtheorem{corollary}{Corollary}
\title{Wasserstein contraction for the stochastic Morris-Lecar neuron model}
\author{Maxime Herda}
\address{Inria, Université de Lille, CNRS, UMR 8524 - Laboratoire Paul Painlevé,\\ F-59000 Lille, France} 
\email{maxime.herda@inria.fr}
\author{Pierre Monmarché}
\address{Sorbonne Université, CNRS, Laboratoire Jacques-Louis Lions, UMR 7598,\\ Laboratoire de Chimie Théorique, UMR 7616, F-75005 Paris, France}
\email{pierre.monmarche@sorbonne-universite.fr}
\author{Benoît Perthame}
\address{Sorbonne Universit{\'e}, CNRS, Universit\'{e} de Paris, Inria, Laboratoire Jacques-Louis Lions, UMR 7598, F-75005 Paris, France} \email{benoit.perthame@sorbonne-universite.fr}
\begin{document}

\maketitle

\begin{abstract}
    Neuron models have attracted a lot of attention recently, both in mathematics and neuroscience. We are interested in studying long-time and large-population emerging properties in a simplified toy model. From a mathematical perspective, this amounts to study the long-time behaviour of a degenerate reflected diffusion process. Using coupling arguments, the flow is proven to be a contraction of the Wasserstein distance for long times, which implies the exponential relaxation toward a (non-explicit) unique globally attractive equilibrium distribution. The result is extended to a McKean-Vlasov type  non-linear  variation of the model, when the mean-field interaction is sufficiently small. The ergodicity of the process results from a combination of deterministic contraction properties and local diffusion, the noise being sufficient to drive the system away from non-contractive domains.
    
\medskip

\noindent\textbf{Mathematics Subject Classification (2020):} 35Q84; 60J60; 92B20 

\medskip

\noindent\textbf{Keywords:} Voltage-conductance kinetic equation; Neural networks; reflected SDEs; 
Fokker-Planck equation; Wasserstein distance; Couplings
    
\end{abstract}

\section{Introduction}
\subsection{The model}

Motivated by mean field representations of neuronal networks, we consider the question of long time behaviour for the following voltage-conductance equations.

Let $a\geqslant 0$, $0<V_L < V_E$, $\gamma,g_L,G_M>0$, $G\in\mathcal C^2 ([V_L , V_E ]; (0,G_M])$, and let $f=f(v,g,t)$ solve
\begin{equation}\label{eq:MorrisLecar}
\left\{\begin{array}{l}
\partial_t f + \partial_v \co \po g_L \po V_L - v \pf + g\po V_E-v\pf \pf f\cf + \gamma \partial_g \co\po G(v) - g\pf f \cf- a^2 \partial_{gg} f = 0 \, ,
\\[5pt]
f(V_E,g,t) = f(V_L,g,t) = 0\, ,
\\[5pt]
 \po G(v) - g\pf f - a \partial_{g} f = 0\qquad \text{at }g=0\,.
\end{array}\right.
\end{equation}
Then $f$ is the density of the process $(v_t,g_t)_{t\geqslant0}$ on $[V_L,V_E]\times\R_+$ solving the SDE
\begin{equation}\label{eq:SDE}
\left\{\begin{array}{rcl}
\dd v_t & = & g_L(V_L-v_t) \dd t + g_t(V_E-v_t)\dd t \, ,\\
\dd g_t  & = & \gamma\po G(v_t)-g_t\pf\dd t + \sqrt{2}a  \dd B_t + \dd L_t\,,
\end{array}\right.
\end{equation}
where $(B_t)_{t\geqslant 0}$ is a standard Brownian motion on $\mathbb{R}$ and $(L_t)_{t\geqslant 0}$ is the local time of $g_t$ at $0$, which means $g_t$ is reflected at $0$. We refer to \cite[Chapter 6]{RevuzYor} for details on local times. In particular, $L$ is non-decreasing with a finite variation on any finite time interval, and  the Radon measure $\dd L_t$ on $\R_+$ is almost surely carried by the set $\{t\geqslant 0,\ g_t = 0\}$. The reflection of $g_t$ at $0$ is the stochastic counterpart of the zero-flux boundary condition at the level of the Fokker-Planck equation, given by the last equation of \eqref{eq:MorrisLecar}.

 The {\em kinetic theory} for neuronal networks, has been developed recently as the mean field representation of large networks with applications to the visual cortex~\cite{CTRM06,RKC}. Macroscopic models of neural networks, as the Integrate-and-Fire equation \cite{CCP_JMN11,BoFaTa,delarue2015global}, describe densities of neurons with potential $v$, this means that the conductance $g$ is supposed to be at equilibrium. System~\eqref{eq:MorrisLecar} is a more detailed kinetic representation and the macroscopic limit is formally obtained as $\gamma \to \infty$. 

\red{Equation \eqref{eq:MorrisLecar} is a simple version of those used to represent electric cells with their voltage~$v$ and  conductance $g$ according to the general theory of spiking neurons~\cite{BressloffBook}. More precisely it is related to Morris-Lecar's  formalism~\cite{MorrisLecar} (a variant of Hodgkin-Huxley) which, for a single neuron, reads in full generality
\[
\frac{dv(t)}{dt}  = \sum_{i=1}^I g_i(t) \big(V_i- v(t)\big), \quad \frac{dg_i(t)}{dt}  = \lambda_i \big(v(t)\big) \big(G_I(v(t))- g_i(t)\big), \quad i=1,...,I,
\]
with $i$ the ionic channels, $V_i$ the reversal potentials, $\lambda_i (v)$ the gating frequency and $G_I(v)$ the equilibrium conductance (for instance of the form $G_I(v) = \alpha (1+\tanh(\beta(v-\gamma))$ for some parameters $\alpha,\beta,\gamma>0$, see \cite{MorrisLecar}). Notice that, if it initially holds, we have for all $t$, $\min_i V_i \leq v(t) \leq \max_i V_i$ and thus there are no neuron with the extreme potentials, which explains the boundary condition in $v$. When considering an assembly of neurons, noise becomes important and it is represented by the Brownian term. It acts only on the variable $g$, i.e., the synaptic activity or the opening and closing of ion channels~\cite{CTSL}. For the deterministic version of \eqref{eq:SDE}} i.e., when $a=0$, the stationary points $(v_*,g_*)$ of~\eqref{eq:SDE} are  determined by the relations
\begin{equation}\label{eq:stationary_point}
  V(g):=\frac{g_L V_L + g V_E}{g_L+g},  \qquad  g_*=G\big( V(g_*)\big), \qquad  v_*= V(g_*) .  
\end{equation}
When $G$ decreases such an equilibrium is unique. For more general functions $G$, there can be several; then a uniqueness condition is that $\frac{\dd}{\dd g} G\big( V(g)\big)<1$, which is also written
\begin{equation}\label{eq:condition_stst}
G'(V(g))(V_E-V(g))<(g_L+g).
\end{equation}
This condition also implies the linear stability of this unique equilibrium.
\\

In order to investigate the large time behavior of solutions to \eqref{eq:MorrisLecar}, we use a mathematical approach based on the coupling of stochastic processes as those in~\eqref{eq:SDE}. It has two advantages compared to other deterministic methods used to investigate the long time behavior of kinetic Fokker-Planck equations, such as hypocoercivity methods \cite{villani2006hypocoercivity, dolbeault2015hypocoercivity}. On the first hand, in comparison to \cite{dolbeault2015hypocoercivity}, we never use the steady state in our computations and rather use an \emph{a posteriori} contraction argument to show its existence and uniqueness. This type of methodology involving non-explicit steady states also appears with long-time analyses based on Doeblin’s or Harris's theorem (see for instance \cite{evans2021asymptotic,SaSm22}). On the second-hand, to the expense of weak topology, our analysis does not rely on  the use of specific commutators and regularity estimates as in \cite{villani2006hypocoercivity}.

 Two cases are distinguished in our work: when $\|G'\|_\infty$ is small (which implies uniqueness of the equilibrium for the deterministic case $a=0$) the deterministic drift of \eqref{eq:SDE} is contracting, namely the distance between two solutions of \eqref{eq:SDE} driven by the same Brownian motion almost surely decays, independently from the noise intensity $a$. However, in the general case where $\|G'\|_\infty$ is not assumed small, this deterministic contraction is no longer true and we need to make use of the noise to get two different trajectories closer on average. The difficulty is then that the diffusion matrix is degenerate. This issue appears in \cite{EberleGuillinZimmer} for the kinetic Fokker-Planck diffusion and \cite{ColombaniLeBris} for a stochastic FitzHugh-Nagumo model. Our work is similar to those, although the coupling construction (and a suitable associated modified distance) should be tailored to the specific dynamics at hand, and we have to deal with the reflection boundary.

\medskip

The rest of the paper is organized as follows. The main results are stated in Section~\ref{sec:results} and proven in Section~\ref{sec:proofs}.  They are extended to systems modeling interacting neurons in Section~\ref{sec:extensions}.

\subsection{Main results}\label{sec:results}

In the following, we state our result for the stochastic system \eqref{eq:SDE}. Denote by $(P_t)_{t\geqslant 0}$ the semigroup associated to \eqref{eq:SDE}, namely
\[\int_{[V_L,V_E]\times\R_+} P_t \varphi f_0 = \mathbb E_{f_0}\po \varphi(v_t,g_t)\pf = \int_{[V_E,V_L]\times\R_+} \varphi f_t\]
for all bounded measurable test function $\varphi$ and initial distribution $f_0$. In other words, $P_t$ is the (backward) solution operator for the dual of~\eqref{eq:MorrisLecar}.

\begin{theorem}[Deterministic contraction]\label{thm:synchrone}
Let $(v_t,g_t)_{t\geqslant 0}$ and $(v_t',g_t')_{t\geqslant 0}$ be two solutions of \eqref{eq:SDE} driven by the same Brownian motion $B_t$. Assume that 
\begin{equation}\label{eq:condition_synchrone}
    (V_E-V_L)\|G'\|_\infty < g_L\,.
\end{equation}
Then, there are constants $C,\lambda>0$ (explicit in the proof and independent from the noise intensity~$a$) such that almost surely, for all $t\geqslant 0$,
\begin{equation}\label{eq:contract}
|(v_t,g_t) - (v_t',g_t')|  \leqslant C e^{-\lambda t} |(v_0,g_0)-(v_0',g_0')|\,.
\end{equation}
\end{theorem}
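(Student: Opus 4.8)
The plan is to find a suitable quadratic form on the difference variables $(\delta v_t, \delta g_t) := (v_t - v_t', g_t - g_t')$ that decays exponentially along the coupled dynamics. Since both processes are driven by the same Brownian motion, the noise terms cancel in the difference, and the local time contributions are favorable: $\mathrm{d}L_t - \mathrm{d}L_t'$ pushes the difference toward zero in the appropriate sense (more precisely, writing the evolution of a smooth function of $\delta g$, the reflection terms only help, because when $g_t = 0$ we have $g_t' \geqslant 0$ so $\delta g_t \leqslant 0$ and $\mathrm{d}L_t \geqslant 0$ decreases $|\delta g_t|$, and symmetrically). So the core of the argument is a deterministic ODE-type computation on
\begin{equation*}
\left\{\begin{array}{rcl}
\mathrm{d}(\delta v_t) &=& -\left(g_L + g_t\right)\delta v_t\,\mathrm{d}t + (V_E - v_t')\,\delta g_t\,\mathrm{d}t\,,\\[2pt]
\mathrm{d}(\delta g_t) &=& \gamma\left(G(v_t) - G(v_t')\right)\mathrm{d}t - \gamma\,\delta g_t\,\mathrm{d}t + (\text{reflection, favorable})\,.
\end{array}\right.
\end{equation*}

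First I would record the bounds available along any trajectory: $v_t \in [V_L, V_E]$ by the boundary conditions, $g_t \geqslant 0$ by reflection, and $0 < G \leqslant G_M$; hence $V_E - v_t' \leqslant V_E - V_L$ and $|G(v_t) - G(v_t')| \leqslant \|G'\|_\infty |\delta v_t|$. Then I would try the Lyapunov function $\Phi_t = \alpha\,(\delta v_t)^2 + (\delta g_t)^2$ for a constant $\alpha > 0$ to be chosen, and compute its time derivative (pathwise, away from the reflection times, with the reflection only contributing nonpositively). The $\delta v$ equation gives a term $-2\alpha(g_L + g_t)(\delta v_t)^2 \leqslant -2\alpha g_L (\delta v_t)^2$ — here we crucially drop the nonnegative $g_t$ contribution since it cannot be bounded below away from $0$ — plus a cross term $2\alpha(V_E - v_t')\delta v_t\,\delta g_t$. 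The $\delta g$ equation gives $-2\gamma(\delta g_t)^2$ plus a cross term $2\gamma(G(v_t) - G(v_t'))\delta g_t$, bounded by $2\gamma\|G'\|_\infty |\delta v_t||\delta g_t|$. Collecting, $\frac{\mathrm{d}}{\mathrm{d}t}\Phi_t \leqslant -2\alpha g_L (\delta v_t)^2 - 2\gamma(\delta g_t)^2 + 2\big(\alpha(V_E - V_L) + \gamma\|G'\|_\infty\big)|\delta v_t||\delta g_t|$, and I would use Young's inequality on the cross term and choose $\alpha$ so that the quadratic form in $(|\delta v_t|, |\delta g_t|)$ is negative definite; this is possible precisely when the off-diagonal coefficient is dominated by the geometric mean of the diagonal ones, which — after optimizing in $\alpha$ — reduces to a condition of the form $(V_E - V_L)\|G'\|_\infty < g_L$, matching \eqref{eq:condition_synchrone}. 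That yields $\frac{\mathrm{d}}{\mathrm{d}t}\Phi_t \leqslant -2\lambda' \Phi_t$ for some $\lambda' > 0$, hence $\Phi_t \leqslant e^{-2\lambda' t}\Phi_0$, and \eqref{eq:contract} follows with $\lambda = \lambda'$ and $C = \sqrt{\max(\alpha,1)/\min(\alpha,1)}$ to pass between $\Phi$ and the Euclidean norm.

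The main obstacle I expect is making the reflection term rigorous: one must justify that for a suitable smooth (or at least $C^2$) function $\phi(\delta g)$ with $\phi' \geqslant 0$ on $\mathbb{R}_+$ and $\phi' \leqslant 0$ on $\mathbb{R}_-$, the contribution $\phi'(\delta g_t)(\mathrm{d}L_t - \mathrm{d}L_t')$ is nonpositive — this uses that $\mathrm{d}L_t$ is carried by $\{g_t = 0\}$, where $\delta g_t = -g_t' \leqslant 0$ so $\phi'(\delta g_t) \leqslant 0$ while $\mathrm{d}L_t \geqslant 0$, and symmetrically for $L'$. Using $\phi(x) = x^2$ works since $\phi'(x) = 2x$ has the right sign, so in fact $2\,\delta g_t\,(\mathrm{d}L_t - \mathrm{d}L_t') \leqslant 0$ directly, and Itô–Tanaka / the standard theory of reflected SDEs (as in \cite[Chapter 6]{RevuzYor}) handles the regularity. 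A secondary, purely bookkeeping point is that $\Phi_t$ is only piecewise differentiable; I would phrase the argument through the integral (Itô) form, noting the martingale part vanishes because the two processes share $B_t$, so that the estimate on $\mathrm{d}\Phi_t$ holds as an inequality between finite-variation processes and Grönwall applies directly. Finally I would extract explicit values: e.g. with the spectral condition satisfied one can take $\alpha$ to be any value making the $2\times 2$ matrix negative definite and read off $\lambda'$ as (a constant times) its spectral gap, giving the claimed explicitness and $a$-independence.
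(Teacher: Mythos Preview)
Your proposal is correct and follows essentially the same approach as the paper: a synchronous coupling together with a weighted quadratic Lyapunov function on the difference $(\delta v_t,\delta g_t)$, using that the Brownian contributions cancel and the local-time terms have the favorable sign (the paper records this via the general Itô formula \eqref{eq:ito2} and the remark following it). The paper weights $|g_t-g_t'|^2$ by a parameter $A$ while you weight $(\delta v_t)^2$ by $\alpha$; with $A=1/\alpha$ the two discriminant conditions coincide and both reduce, after optimizing the weight, exactly to \eqref{eq:condition_synchrone}.
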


\begin{corollary} \label{cor:ML}
Under the assumption and with the notations of Theorem \ref{thm:synchrone}, for all \red{Lipschitz continuous $\varphi \in \mathcal C^1([V_L,V_E]\times\R_+, \R)$}  and all $t\geqslant 0$,
\begin{equation}\label{eq:gradient}
|\na P_t \varphi| \leqslant C e^{-\lambda t} P_t|\na\varphi|\,.  
\end{equation}
Moreover, there exists a unique invariant distribution $\mu_\infty^{(a)}$ of \eqref{eq:MorrisLecar}, which satisfies the following log-Sobolev inequality:  for all $\varphi\in\mathcal C^\infty([V_L,V_E]\times\R_+)$ with $\int \varphi^2 \mu_\infty^{(a)} = 1$,
\begin{equation}
    \label{eq:LSI}
    \int_{[V_L,V_E]\times\R_+} \varphi^2 \ln \varphi^2 \mu_\infty^{(a)} \leqslant \frac{2C}{\lambda} \int_{[V_L,V_E]\times\R_+}  |\nabla \varphi|^2 \mu_\infty^{(a)}.
\end{equation}

Finally, the deterministic ODE corresponding to $a=0$ admits a unique stationary state $(v_*,g_*)$ and, for all $a>0$,
\[\int_{[V_L,V_E]\times \R_+} \po |v-v_*|^2 + |g-g_*|^2 \pf \mu_\infty^{(a)}(\dd v,\dd g) \leqslant \frac{C}{\lambda} a^2\,.
\]
\end{corollary}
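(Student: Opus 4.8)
The plan is to derive everything from the trajectorial contraction~\eqref{eq:contract} of Theorem~\ref{thm:synchrone} by standard semigroup and coupling arguments, handling the items in the order they are stated. First, the gradient bound~\eqref{eq:gradient}: fix a point $(v_0,g_0)$ and a direction, and run two copies of~\eqref{eq:SDE} driven by the same Brownian motion, started at $(v_0,g_0)$ and at a nearby point $(v_0',g_0')$. By definition of $P_t$ and the chain rule,
\begin{equation}
|P_t\varphi(v_0,g_0) - P_t\varphi(v_0',g_0')| = |\mathbb E[\varphi(v_t,g_t) - \varphi(v_t',g_t')]| \leqslant \mathbb E\co |\na\varphi(\xi_t)|\,|(v_t,g_t)-(v_t',g_t')|\cf
\end{equation}
for some intermediate point $\xi_t$ on the segment (assuming $\varphi$ smooth); applying~\eqref{eq:contract} and letting $(v_0',g_0')\to(v_0,g_0)$ gives~\eqref{eq:gradient} after identifying $\mathbb E[|\na\varphi|(v_t,g_t)] = P_t|\na\varphi|(v_0,g_0)$. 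One should be slightly careful that the reflection at $g=0$ does not spoil differentiability of $P_t\varphi$; this is where the a.s. pathwise bound~\eqref{eq:contract}, valid regardless of how many times either trajectory touches the boundary, does the work, and a density/regularization argument extends from smooth $\varphi$ to the Lipschitz case.

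Next, existence and uniqueness of $\mu_\infty^{(a)}$: the bound~\eqref{eq:contract} implies that $(P_t)$ is a contraction for the $L^1$-Wasserstein distance $\mathcal W_1$ with rate $Ce^{-\lambda t}$ (couple two initial distributions optimally, then run the synchronous coupling). Pick $T$ with $Ce^{-\lambda T}<1$; then $P_T$ is a strict contraction on the complete metric space $(\mathcal P_1([V_L,V_E]\times\R_+),\mathcal W_1)$, so it has a unique fixed point $\mu_\infty^{(a)}$, which by the semigroup property is invariant for all $P_t$ and globally attractive. For the log-Sobolev inequality~\eqref{eq:LSI}: this is the classical Bakry--Émery-type consequence of a gradient contraction estimate. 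From~\eqref{eq:gradient} and the invariance of $\mu_\infty^{(a)}$ one gets the local Poincaré/commutation estimate $|\na P_t\varphi|^2 \leqslant C^2 e^{-2\lambda t} P_t(|\na\varphi|^2)$ (by Cauchy--Schwarz on~\eqref{eq:gradient}), and then the standard argument — differentiating $t\mapsto P_t\big((P_{T-t}\varphi)^2\ln (P_{T-t}\varphi)^2\big)$ along the flow, integrating in $t$, using the gradient bound and letting $T\to\infty$ so that $P_T\varphi\to\int\varphi\,\mathrm d\mu_\infty^{(a)}$ — yields~\eqref{eq:LSI} with constant $2C/\lambda = 2\int_0^\infty C e^{-\lambda t}\,\mathrm dt \cdot (\text{const})$; the precise constant comes out as $\frac{2C}{\lambda}$ after the $t$-integration. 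Here one needs enough regularity of $P_t\varphi$ and integrability to justify the differentiation under the integral; a localization in $g$ plus the regularizing effect of the nondegenerate noise in the $g$-direction (and the deterministic drift transmitting it to $v$) handles this, or one invokes the abstract result that a gradient commutation estimate for a Markov semigroup with invariant measure implies the corresponding LSI.

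Finally, the quantitative comparison with the deterministic equilibrium: existence and uniqueness of $(v_*,g_*)$ for $a=0$ follows from~\eqref{eq:stationary_point}--\eqref{eq:condition_stst}, since~\eqref{eq:condition_synchrone} forces $G'(V(g))(V_E-V(g)) \leqslant (V_E-V_L)\|G'\|_\infty < g_L \leqslant g_L+g$, i.e.\ the uniqueness condition~\eqref{eq:condition_stst} holds. For the variance bound, observe that the Dirac mass $\delta_{(v_*,g_*)}$ is the invariant measure of the (trivial, noiseless) flow, and couple the stationary process $(v_t,g_t)\sim\mu_\infty^{(a)}$ with the deterministic solution $(v_t',g_t')\equiv(v_*,g_*)$ started at the equilibrium, both driven by the same Brownian motion $B_t$; but the deterministic one does not see $B_t$. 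Instead, I would run two copies of~\eqref{eq:SDE}, one at stationarity and one started from $\delta_{(v_*,g_*)}$, with the same Brownian motion, and compare; alternatively — and more cleanly — apply~\eqref{eq:contract} with the second trajectory being the $a=0$ solution started at $(v_*,g_*)$, noting that the difference process $\delta_t := (v_t,g_t)-(v_*,g_*)$ satisfies a linear-drift SDE whose drift is contracting by the same computation as in Theorem~\ref{thm:synchrone}, forced only by $\sqrt2\,a\,\mathrm dB_t + \mathrm dL_t$. Taking expectations of a suitable quadratic Lyapunov functional of $\delta_t$, the martingale part contributes $O(a^2)$ per unit time, the local-time part contributes with a favorable sign (reflection pushes $g$ upward, toward the region where $g_*>0$ lies in the interior), and the contraction rate $\lambda$ converts this into the stationary bound $\int(|v-v_*|^2+|g-g_*|^2)\,\mathrm d\mu_\infty^{(a)} \leqslant \frac{C}{\lambda}a^2$ after sending $t\to\infty$ and using that $\mu_\infty^{(a)}$ is invariant. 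The main obstacle is this last step: controlling the contribution of the reflecting local time $\mathrm dL_t$ in the Lyapunov estimate — one must check that $(g_t-g_*)\,\mathrm dL_t \leqslant 0$ (which holds since $g_*>0$ and $\mathrm dL_t$ is carried by $\{g_t=0\}$, giving $(0-g_*)\,\mathrm dL_t = -g_*\,\mathrm dL_t \leqslant 0$, provided the quadratic form couples to $(g-g_*)$ with the right sign) and that cross terms between $v$ and $g$ in the modified quadratic form, needed to get the contraction, do not break this sign; this is exactly the place where the specific modified distance constructed in the proof of Theorem~\ref{thm:synchrone} must be reused rather than the naive Euclidean one.
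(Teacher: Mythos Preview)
Your proposal is correct and follows the same route as the paper. For the gradient bound and the log-Sobolev inequality the paper simply invokes an external reference (\cite{M27}, Theorem~1 and Proposition~3), whose content is precisely the semigroup/Bakry--\'Emery argument you sketch; for the variance bound the paper does exactly what you propose, namely rerun the computation of Theorem~\ref{thm:synchrone} with $(v_t',g_t')$ taken as the $a=0$ solution sitting at $(v_*,g_*)$, picking up the extra $Aa^2$ from the It\^o correction and then letting $t\to\infty$.

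One clarification on your last worry: the modified quadratic form used in the proof of Theorem~\ref{thm:synchrone} is diagonal, $|v-v'|^2 + A|g-g'|^2$, with no $v$--$g$ cross term. Hence the only local-time contribution in the Lyapunov estimate is $2A(g_t-g_*)\,\dd L_t = -2Ag_*\,\dd L_t \leqslant 0$, and there is nothing further to check; your anticipated obstacle does not arise.
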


Notice that condition \eqref{eq:condition_synchrone} is stronger than \eqref{eq:condition_stst} and thus, by itself, existence and uniqueness of a steady state for the deterministic system is obvious.

Since the diffusion matrix of the process is degenerate,  the log-Sobolev inequality \eqref{eq:LSI} does not imply the exponential decay of the relative entropy along the flow \eqref{eq:MorrisLecar}. It could however allow to get an hypocoercive decay (for $(P_t)_{t\geqslant 0}$ rather than the dual operator acting on relative densities) using \cite[Theorem 10]{monmarche2019generalized}, which doesn't require the explicit knowledge of the invariant measure. We do not address this question, and refer to \cite{BakryGentilLedoux} for details on the log-Sobolev inequality and some of its consequences. 

The next result deals with the general case where \eqref{eq:condition_synchrone} is not enforced. The estimates are no longer uniform in $a>0$ (and the result doesn't hold for $a=0$). For $p\geqslant 1$, we write $\mathcal P_p([V_L,V_E]\times \R_+)$ the set of probability measures on $[V_L,V_E]\times \R_+$ with a finite $p^{th}$ moment, and $\mathcal W_p$ the $L^p$-Wasserstein distance on $\mathcal P_p([V_L,V_E]\times \R_+)$, defined as
\[\mathcal W_p(\nu,\mu) = \inf_{\pi\in\mathcal C(\nu,\mu)} \po \int_{([V_L,V_E]\times \R_+)^2}|x-x'|^p\pi(\dd x,\dd x')\pf^{1/p}\]
where $\mathcal C(\nu,\mu)$ is the set of probability measures on $([V_L,V_E]\times \R_+)^2$ with marginals $\nu$ and $\mu$.

\begin{theorem}[Noise-induced contraction]\label{thm:miroir}
For any values of the parameters \red{($G$, $\gamma$, $a$, $g_L$, $V_E$, $V_L$)} with $a>0$ there are constants $C,\lambda>0$ (explicitly given in the proof) such that, for any initial distributions $\nu,\mu\in\mathcal P_1([V_L,V_E]\times \R_+)$ and all $t\geqslant 0$,
\begin{equation}
    \label{eq:contractW_1}
    \mathcal W_1(\nu P_t,\mu P_t ) \leqslant C e^{-\lambda t}  \mathcal W_1(\nu ,\mu  )\,.
\end{equation}

Moreover, the process admits a unique invariant measure $\mu_\infty^{(a)}$. For all $p>q\geqslant 1$, $\mu_\infty^{(a)}\in\mathcal P_p([V_L,V_E]\times \R_+)$ and for  all $\nu \in\mathcal P_p([V_L,V_E]\times \R_+)$, $\mathcal W_q(\nu P_t,\mu_\infty^{(a)}) \rightarrow 0$ as $t\rightarrow \infty$. For all $\nu \in\mathcal P([V_L,V_E]\times \R_+)$, $\nu P_t$ \red{weakly} converges to $\mu_\infty^{(a)}$.
\end{theorem}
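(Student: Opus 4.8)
The plan is to build an explicit coupling of two copies $(v_t,g_t)$ and $(v_t',g_t')$ of \eqref{eq:SDE} that makes a suitable \emph{modified} distance contract in expectation. The obstacle, as announced in the introduction, is that the noise only acts on the $g$-variable and that the deterministic drift is not contractive when $\|G'\|_\infty$ is large; moreover we must handle the reflection of $g$ and $g'$ at $0$. The strategy is therefore a two-regime coupling. When $(v_t,g_t)$ and $(v_t',g_t')$ are far apart, or more precisely in the region where the deterministic flow is contractive (this includes, heuristically, the region where $|v-v'|$ is small compared to $|g-g'|$, by a computation analogous to the one behind Theorem \ref{thm:synchrone}), we run the synchronous coupling $\dd B_t = \dd B_t'$ and let the contraction from the $v$-equation (and the damping $-\gamma g$) do the work. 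When they are in the ``bad'' region — essentially when $|v-v'|$ is of the same order as, or larger than, $|g-g'|$, so that the $v$-drift difference $g_t(V_E-v_t)-g_t'(V_E-v_t')$ can push them apart — we switch to a reflection (mirror) coupling on the Brownian motions, $\dd B_t' = \dd B_t - 2\langle e_t,\dd B_t\rangle e_t$ with $e_t$ the unit vector along $g_t-g_t'$; this creates genuine contraction of $|g_t-g_t'|$ at a rate coming from the noise, which then feeds back (after a short delay) into contraction of $|v_t-v_t'|$. One interpolates between the two regimes with a Lipschitz function of the current configuration so that the coupling SDE has well-posed (Lipschitz, or at least monotone) coefficients.

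Concretely I would: (1) write the system for the difference process $(w_t,h_t)=(v_t-v_t',g_t-g_t')$, keeping track of the two local-time terms $\dd L_t-\dd L_t'$, and note that the reflection is ``nice'' in the sense that when $g_t=0$ the term $\dd L_t\geqslant 0$ only helps (the boundary is convex/attracting for the coupling); (2) design a semimetric $\rho(v,g,v',g') = f\big(|v-v'|,\, |g-g'| + \varepsilon \,\mathrm{sgn}\text{-correction}\big)$ — in the spirit of \cite{EberleGuillinZimmer,ColombaniLeBris}, a concave increasing function of a Lyapunov-type combination $\alpha|v-v'| + |g-g'|$ possibly with a cross term — chosen so that in the contractive regime the deterministic drift gives $\mathcal L\rho \leqslant -\lambda\rho$ directly, while in the bad regime the second-order term from the mirror coupling, $\,2a^2 f''(\cdot)\,$, dominates the (bounded, by compactness of $[V_L,V_E]$ and the moment control on $g$) bad drift contributions; (3) apply Itô's formula to $e^{\lambda t}\rho_t$, check the drift is $\leqslant 0$ in both regimes and on the interpolation zone, take expectations (the martingale part is a true martingale after localization, then pass to the limit using the moment bounds), and conclude $\mathbb E[\rho_t]\leqslant e^{-\lambda t}\mathbb E[\rho_0]$. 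Since $\rho$ is bi-Lipschitz-equivalent to the Euclidean distance (as $f$ is concave, increasing, $f(0)=0$, with $f'$ bounded above and below on the relevant compact-in-$v$, moment-controlled-in-$g$ range), taking the infimum over couplings of the initial data yields \eqref{eq:contractW_1}.

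For the second part, existence and uniqueness of $\mu_\infty^{(a)}$ follows from \eqref{eq:contractW_1} by a standard Banach-fixed-point / Cauchy-sequence argument: $(\nu P_{n})_{n}$ is Cauchy in $(\mathcal P_1,\mathcal W_1)$, which is complete, so it has a limit, shown to be invariant and independent of $\nu$ by the contraction. The moment bound $\mu_\infty^{(a)}\in\mathcal P_p$ for all $p$ comes from a Lyapunov function of the type $V(v,g)=(1+g^2)^{p/2}$: using the drift $-\gamma g\,\partial_g$ and the fact that $|g_L(V_L-v)+g(V_E-v)|$ keeps $v$ in $[V_L,V_E]$, one gets $\mathcal L V \leqslant -c V + C$ uniformly, hence uniform-in-time $p$-th moment bounds on $\nu P_t$ for $\nu\in\mathcal P_p$, and these pass to the limit $\mu_\infty^{(a)}$; combined with $\mathcal W_1$-convergence and uniform integrability of the $q$-th powers (since $p>q$), this upgrades to $\mathcal W_q(\nu P_t,\mu_\infty^{(a)})\to 0$. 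Finally, for a general $\nu\in\mathcal P([V_L,V_E]\times\R_+)$ without moment assumptions, $\nu P_t$ converges to $\mu_\infty^{(a)}$ (weakly): truncate $\nu$ to $\nu_R$ supported on $g\leqslant R$, apply the $\mathcal P_1$ result to $\nu_R$, and let $R\to\infty$ using that the influence of the far-away mass is controlled by the instantaneous smoothing/Lyapunov estimate. I expect step (2) — engineering the modified distance and matching the two regimes across the interpolation zone so that the bad drift is genuinely beaten by $a^2 f''$ — to be the main obstacle; everything after \eqref{eq:contractW_1} is routine Lyapunov-function bookkeeping.
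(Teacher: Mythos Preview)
Your overall framework (Eberle-style coupling, concave modified distance, Itô on $e^{\lambda t}\rho_t$, then Banach fixed point and Lyapunov moments for the second part) matches the paper, and everything you say about the second paragraph of the statement is essentially what the paper does.

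The gap is in step~(2). A distance of the form $f(\alpha|v-v'|+|g-g'|)$ with a \emph{fixed} weight~$\alpha$ will not close, no matter how you choose the concave~$f$ or how you switch between synchronous and mirror noise. Write $x=|v-v'|$, $y=|g-g'|$. There are two deterministic cross terms to absorb: $(V_E-V_L)\,y$ coming from the $v$-equation, and $\gamma\|G'\|_\infty\,x$ coming from the $g$-equation. For the first you need the weight on~$y$ to satisfy (roughly) $\gamma\cdot\text{weight}(y)>(V_E-V_L)\cdot\text{weight}(x)$; for the second you need $g_L\cdot\text{weight}(x)>\gamma\|G'\|_\infty\cdot\text{weight}(y)$. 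These two constraints are compatible for a single weight only when $(V_E-V_L)\|G'\|_\infty<g_L$, i.e.\ exactly the hypothesis of Theorem~\ref{thm:synchrone} that you are \emph{not} allowed to assume here. The paper's remedy is to let the weight on~$y$ vary with~$y$: one takes $R=\theta_1(x)+\theta_2(y)$ and then $\rho(R)$ with $\rho$ concave, where $\theta_2'$ equals a small constant $m=\tfrac{g_L}{2\gamma\|G'\|_\infty}$ for $y\leqslant 1$, increases, and equals a large constant $M=\tfrac{4(V_E-V_L)}{\gamma}$ for $y\geqslant R_*$. With $\theta_2'=m$ the second cross term is killed (regime $y$ small, $x$ moderate: contraction from $-g_L x$); with $\theta_2'=M$ the first cross term is killed (regime $y$ large: contraction from $-\gamma y$). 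The intermediate range $y\in[\xi,R_*]$ is precisely where the noise is used: here $\beta_t=1$ and the concavity term $4a^2(\theta_2'(y))^2\rho''(R)$, with $\rho''(r)=-2kr\rho'(r)$ for a sufficiently large $k$ depending on $a^{-2}$, beats both cross terms.

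Two smaller corrections. First, the regime split is by the size of $y=|g-g'|$, not by whether the drift happens to be contractive; the paper uses (a regularization of) the mirror coupling throughout, and the ``switching'' lives entirely in the shape of $\theta_2$ and $\rho$, not in the choice of $\alpha_t,\beta_t$. Second, the mirror noise does not make $\mathbb E[y_t]$ decrease (the drift of $y_t$ is coupling-independent); its role is solely to produce the negative second-order term $a^2\rho''$ that the concavity of~$\rho$ converts into contraction of $\mathbb E[\rho(R_t)]$. Once you build in the $m$-to-$M$ transition in $\theta_2'$ and run the case analysis on~$y_t$, the rest of your plan goes through as written.
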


The Wasserstein contraction \eqref{eq:contractW_1} implies ergodicity, a Markovian Central Limit Theorem and exponential concentration inequalities for empirical averages of the process, see e.g. \cite{Djellout} and references within. \red{Recall that the convergence in the $\mathcal W_q$ sense is equivalent to the weak convergence plus convergence of all moments up to order $q$ \cite{Villani}.} Notice that, in Theorem~\ref{thm:synchrone}, the deterministic contraction~\eqref{eq:contract} straightforwardly implies an exponential Wasserstein contraction similar to \eqref{eq:contractW_1} but for $\mathcal W_p$ distances for all $p\geqslant 1$ \red{(see \cite{M27} for details).}

\section{Proofs}\label{sec:proofs}

\subsection{The coupling(s)}\label{sec:couplings}

In this section we introduce the couplings that will be used in the proofs.

Let $B,B'$ be two independent standard Brownian motions on $\R$, and let $\alpha,\beta$ be two $\mathcal C^2$ Lipschitz non-negative functions on~$\R$ with $\alpha^2 +\beta^2 = 1$. Given some initial condition $(v_0,g_0,v_0',g_0')$, consider $(v_t,g_t,v_t',g_t')$ the solution of
\begin{equation}
    \label{eq:couple_def}\left.
\begin{array}{rcl}
\dd v_t & = & g_L(V_L-v_t) \dd t + g_t(V_E-v_t)\dd t\\
\dd g_t  & = & \gamma\po G(v_t)-g_t\pf\dd t + \sqrt{2}  a\co \alpha_t\dd B_t + \beta_t \dd B_t'\cf  + \dd L_t\\
\dd v_t' & = & g_L(V_L-v_t') \dd t + g_t'(V_E-v_t')\dd t\\
\dd g_t'  & = & \gamma\po G(v_t')-g_t'\pf\dd t + \sqrt{2}  a  \co \alpha_t\dd B_t - \beta_t  \dd B_t'\cf + \dd   L_t'\,,
\end{array}\right\}
\end{equation}
where \[\alpha_t = \alpha(|g_t-g_t'|^2),\quad  \beta_t = \beta(|g_t-g_t'|^2), \]   and $L, L'$ are respectively the local time at $0$ of~$g_t$ and~$g_t'$. 

By \cite[Theorem 5.1]{saisho}, the process is well-defined, at least up to time $\tau_n = \inf\{t\geqslant 0, g_t+g_t'\geqslant n\}$ for all $n\in\N$, and thus for all times by a standard Lyapunov argument (using e.g. that $(v,g,v',g') \mapsto g^2+(g')^2$ is a Lyapunov function).

By Levy's characterization of the Brownian motion, the processes $W,W'$ defined by
\[W_t = \int_0^t \co \alpha_s\dd B_s + \beta_s \dd B_s'\cf\,,\qquad  W_t' = \int_0^t  \co \alpha_s\dd B_s - \beta_s \dd B_s'\cf\]
are Brownian motions too. As a consequence, $Z_t=(v_t,g_t)$ and $Z_t'=(v_t',g_t')$ are solutions of \eqref{eq:SDE} driven by $W$ and $W'$, so that their laws solve the PDE \eqref{eq:MorrisLecar}.

When $\alpha = 1$ (and thus $\beta=0$), the process $(v_t,g_t,v_t',g_t')$ is called a {\em synchronous coupling} for~\eqref{eq:SDE}. When $\beta(s) = \1_{s>0}$, this is called a {\em mirror (or reflection) coupling} for~\eqref{eq:SDE}. Although~$\alpha$ and~$\beta$ are not $\mathcal C^2$ in this case, it has been shown to be well-defined, at least for elliptic diffusion processes, see e.g. \cite{cranston1991gradient,kendall1986nonnegative,wang1994application} for reflected processes. In order to avoid technical issues in our hypoelliptic context, and since we focus on the Wasserstein distance rather than the total variation (so that we do not need coalescent processes), we will work with a regularized version in Section~\ref{sec:miroir}.

In the general case, for any $f_1,f_2\in\mathcal C^2(\R)$,   by Itô's formula,
\begin{eqnarray}
    \lefteqn{\dd f_1\po |v_t-v_t'|^2\pf}\nonumber\\  
 & = & f_1'\po |v_t-v_t'|^2\pf\co-2(g_L+g_t) |v_t-v_t'|^2 + 2(v_t-v_t')(g_t-g_t')(V_E-v_t') \cf \dd t, \label{eq:ito1}
\end{eqnarray}
\red{and}
\begin{eqnarray}
 \lefteqn{\dd f_2\po |g_t-g_t'|^2\pf}\nonumber\\
 & =&  2(g_t-g_t')  f_2'\po |g_t-g_t'|^2\pf   \co \gamma\po G(v_t)-g_t-G(v_t')+g_t'\pf\dd t + 2\sqrt{2}  a\beta_t \dd B_t'   \cf   \nonumber \\
 & \ &+8a^2 \beta_t^2  \po   f_2'\po |g_t-g_t'|^2\pf + 2|g_t-g_t'|^2 f_2''\po |g_t-g_t'|^2\pf \pf \dd t     \nonumber \\
 &  \ &- 2g_t'  f_2'\po |g_t'|^2\pf  \dd L_t -  2 g_t  f_2'\po |g_t|^2\pf  \dd  L_t' \,. \label{eq:ito2}
 \end{eqnarray}
In the last two terms, we used that $L$ (\emph{resp.} $L'$)  is constant whenever $g_t> 0$ (\emph{resp.} $g_t'> 0$). Moreover, since $L$ and $ L'$ are non-decreasing, as soon as $f_2$ is non-decreasing, the contribution of the local times is negative (which is a general fact when the boundary of the domain is convex, see \cite[Lemma 2.1]{wang1994application}).

\subsection{Synchronous coupling: proof of Theorem~\ref{thm:synchrone}}

This section is devoted to the proofs of Theorem~\ref{thm:synchrone} and Corollary~\ref{cor:ML}.

\begin{proof}[Proof of Theorem~\ref{thm:synchrone}]
Taking $\alpha = 1$, $\beta=0$ in the definition of the process \eqref{eq:couple_def} (which is thus a synchronous coupling) and $f_1(x)=f_2(x)=x$ in \eqref{eq:ito1} and \eqref{eq:ito2} we get, for $A>0$, almost surely for all $t\geqslant 0$,
\begin{eqnarray*}
\frac12 \dd \po |v_t-v_t'|^2 + A |g_t-g_t'|^2 \pf   &\leqslant & - g_L  |v_t-v_t'|^2 \dd t + (V_E-V_L)|v_t-v_t'|  |g_t-g_t'| \dd t\\
& & + A \gamma  \po \|G'\|_\infty |v_t-v_t'||g_t-g_t'| -|g_t-g_t'|^2  \pf \dd t \,.
\end{eqnarray*}
This is less than $-\lambda  \po |v_t-v_t'|^2 + A |g_t-g_t'|^2 \pf \dd t$ for some $\lambda>0$ if  $A>0$ is such that 
\[Q(A) := \po V_E-V_L +A\gamma \|G'\|_\infty \pf^2 - 4A\gamma g_L <0\,.\]
This is always possible when $G$ is constant, by taking $A$ large enough. When $\|G'\|_\infty \neq 0$, the minimum of $Q$ is attained at
\[A_0 =  \frac{1}{\gamma \|G'\|_\infty}\po \frac{2 g_L }{\|G'\|_\infty} - (V_E-V_L)\pf \,. \]
So, $Q$ takes negative values on $\R_+$ 
 iff $Q(A_0)<0$,
namely iff
\[(V_E-V_L)\|G'\|_\infty <  g_L\,,\]
that is condition \eqref{eq:condition_synchrone}. Under this condition, we thus get 
\[\frac12 \dd \po |v_t-v_t'|^2 + A |g_t-g_t'|^2 \pf  \leqslant -\lambda \po |v_t-v_t'|^2 + A |g_t-g_t'|^2 \pf \,, \]
with $\lambda =- Q(A_0)>0$, from which conclusion easily follows.

\end{proof}

\begin{proof}[Proof of Corollary \ref{cor:ML}]

The first part  follows from \cite{M27}. More precisely, the proof of \eqref{eq:gradient} from Theorem~\ref{thm:synchrone} is given in the proof of \cite[Theorem 1 (iii)$\Rightarrow$(v)]{M27}, and then the log-Sobolev inequality is \cite[Proposition 3]{M27}. \red{Moreover, \eqref{eq:contract} implies that $P_{t_0}$ is a contraction on the  metric space $(\mathcal P_1([V_L,V_E]\times\R_+),\mathcal W_1)$ (which is complete by \cite[Theorem 6.18]{Villani}) for some $t_0$ large enough. Indeed, given $\nu_0,\nu_0' \in \mathcal P_1([V_L,V_E]\times\R_+)$ and any coupling $\pi_0 \in \mathcal C(\nu_0,\nu_0')$, by   considering initial conditions $(z_0,z_0'):=((v_0,g_0),(v_0',g_0'))$ distributed according to $\pi_0$, we get that $(z_t,z_t'):=((v_t,g_t),(v_t',g_t'))$ (as considered in Theorem~\ref{thm:synchrone}, namely as two solutions of \eqref{eq:SDE} driven by the same Brownian motion) is a coupling of $\nu_0 P_t$ and  $\nu_0'P_t$, from which 
\begin{equation*}
    \mathcal W_1(\nu_0 P_t,\nu_0'P_t) \leqslant \mathbb E\po |z_t-z_t'| \pf \leqslant  C e^{-\lambda t} \mathbb E_{\pi_0}\po |z_0-z_0'| \pf\,,
\end{equation*}
and taking the infimum over $\pi_0$ yields the $\mathcal W_1$ contraction
\begin{equation}
    \label{eq:Thm1toThm2}
    \mathcal W_1(\nu_0 P_t,\nu_0'P_t) \leqslant   C e^{-\lambda t}  \mathcal W_1(\nu_0 ,\nu_0')\,.
\end{equation}
Denoting by $\mu_\infty^{(a)}$ the fixed point of $P_{t_0}$  and using that $P_{t_0}P_s = P_s P_{t_0}$ for all $s\geqslant 0$, we get that $\mu_\infty^{(a)}P_s$ is a fixed point of $P_{t_0}$ and thus, by uniqueness of the latter, that $\mu_\infty^{(a)}P_s = \mu_\infty^{(a)} $  for all $s\geqslant 0$, i.e. $\mu_\infty^{(a)}$ is an invariant measure of the semi-group. }

For the comparison with the deterministic case, we follow the  computations of the proof of Theorem~\ref{thm:synchrone} except that, now, $(v_t',g_t')_{t\geqslant 0}$ is associated to $a=0$ (i.e. $a=0$ in the fourth line of \eqref{eq:couple_def}). Then, considering $A$ and $\lambda$ as in the previous section (in particular they are independent from the noise intensity $a$), we find
 \begin{eqnarray*}
\frac12 \frac{\dd}{\dd t} \mathbb E\po  |v_t-v_t'|^2 + A |g_t-g_t'|^2 \pf   &\leqslant & \mathbb E \Big[  - g_L  |v_t-v_t'|^2   + (V_E-V_L)|v_t-v_t'|  |g_t-g_t'| \\
& & + A \gamma  \po \|G'\|_\infty |v_t-v_t'||g_t-v_t'| -|g_t-g_t'|^2  \pf  \Big] +   Aa^2 \\
&\leqslant&  - \lambda   \mathbb E\po  |v_t-v_t'|^2 + A |g_t-g_t'|^2 \pf +   Aa^2 \,. 
\end{eqnarray*}
Notice that Theorem~\ref{thm:synchrone} applies to the case $a=0$, so we find that the condition \eqref{eq:condition_synchrone} implies that the stationary points $(v_*,g_*)$ given by \eqref{eq:stationary_point} are globally attractive for the deterministic ODE.  Furthermore, letting $t\rightarrow \infty$ in the previous bound with $(v'_t,g'_t)=(v_*,g_*)$, we end up with
\[ \int_{[V_L,V_E]\times \R_+} \po |v-v_*|^2 + A |g-g_*|^2 \pf \mu_\infty^{(a)}(\dd v,\dd g) \leqslant \frac{A}{\lambda} a^2\,,\]
where $\mu_\infty^{(a)}$ is the invariant measure with noise intensity $a$.
\end{proof}

\subsection{Mirror coupling: proof of Theorem~\ref{thm:miroir}}\label{sec:miroir}

This section is devoted to the proof of Theorem~\ref{thm:miroir}. To this end, we consider the full coupling introduced in Section~\ref{sec:couplings}. For some $\xi\in(0,1]$, which will be sent to $0$ at the end, we choose  $\alpha,\beta$ such that $\beta(s^2) = 1$ for $s\geqslant \xi$ and $\beta(s^2)=0$ for $s\leqslant \xi/2$ (recall that $\alpha^2+\beta^2=1$). As discussed in Section~\ref{sec:couplings}, this is a regularized version of  a mirror coupling.

\begin{remark}
In fact the use of a mirror coupling (i.e. $\beta(s^2)=1$ for $s$ away from zero) is not crucial in the proof. The important point is to keep some noise in the evolution of the difference between the two processes, i.e. to avoid the synchronous coupling ($\beta=0$). For instance an independent coupling (corresponding to $\alpha(s^2)=\beta(s^2)=1/\sqrt2$ for $s\geqslant \xi$) would also work. The mirror coupling is a natural choice as it maximizes the variance of the noise.
\end{remark}

We write $x_t=|v_t-v_t'|$ and $y_t=|g_t-g_t'|$. \red{To prove \eqref{eq:contractW_1},} we assume that $\|G'\|_\infty \neq 0$ (otherwise \eqref{eq:contractW_1} is simply a consequence of  Theorem~\ref{thm:synchrone}\red{, as we saw with \eqref{eq:Thm1toThm2}}). 

We now have to design a suitable distance between the two processes, that will tend to decrease on average. To do so, let $\theta_1,\theta_2\in \mathcal C^2(\R_+,\R_+)$ be non-decreasing functions such that $\theta_1(r)=\theta_2(r)=0$ for all $r\in[0,\xi/4]$. In particular, $r\mapsto f_i(r)=\theta_i(\sqrt{r})$  is $\mathcal C^2$ for $i=1,2$ and Itô's formula (\eqref{eq:ito1} and \eqref{eq:ito2}) gives
\begin{eqnarray*}
\dd \theta_1(x_t) & \leqslant & \theta_1'(x_t) \co-(g_L+g_t) x_t + y_t(V_E-v_t') \cf \dd t\\
\dd \theta_2(y_t)& = &    \theta_2'(y_t)    \co \gamma\po G(v_t)-G(v_t')-y_t\pf\dd t + 2\sqrt{2}  a\beta_t \dd B_t'   \cf   \\
& & +4a^2 \beta_t^2 \theta_2''(y_t) \dd t  -   \theta_2'(y_t)  (\dd L_t  +  \dd  L_t') \,. 
\end{eqnarray*}
\red{For the last term, we have simply used that, on $\{t\geqslant 0,\ g_t=0\}$ (outside of which, almost surely, $\dd L_t=0$), it holds $2g_t'  f_2'\po |g_t'|^2\pf=2|g_t'-g_t|  f_2'\po |g_t'-g_t|^2\pf  =   \theta_2'(y_t) $, and similarly for the term $2 g_t  f_2'\po |g_t|^2\pf \dd L_t' $. }

Setting $R_t = \theta_1(x_t)+\theta_2(y_t)$ and considering a $\mathcal C^2$ function $\rho :\R_+\rightarrow\R_+$,
\begin{eqnarray*}
\dd \rho( R_t)  &= &  \rho'(R_t) \po \dd \theta_1(x_t) + \dd \theta_2(y_t)\pf + 4 a^2 \beta_t^2 \po \theta_2'(y_t)\pf^2 \rho''(R_t)  \dd t\,.
\end{eqnarray*}
As a consequence, when, additionally,  $\rho,\theta_1$ and $\theta_2$ are  non-decreasing we find
\begin{eqnarray*}
\frac{\dd}{\dd t} \mathbb E\po \rho(R_t)\pf & \leqslant & \mathbb E\po \Psi_t\pf
\end{eqnarray*}
where, using that the contribution of the local times is non-positive, $\Psi_t$ is given by
\begin{eqnarray*}
\Psi_t &= & \rho'(R_t) \co  \theta_1'(x_t) \co- g_L x_t + y_t(V_E-V_L) \cf +  \theta_2'(y_t)     \gamma\po \|G'\|_\infty x_t-y_t\pf  +4a^2 \beta_t^2 \theta_2''(y_t)  \cf \\
& & + 4 a^2 \beta_t^2 \po \theta_2'(y_t)\pf^2 \rho''(R_t)  \,.
\end{eqnarray*}
The key point of the proof is to establish that $\Psi_t \leqslant -\lambda \rho(R_t)$ for some $\lambda>0$  up to some terms that vanish with $\xi$. To that end, we will choose more specific functions $\rho,\theta_1,\theta_2$ (which are represented in Figure~\ref{figure1}). 
\begin{figure}[h!]
    \centering
    \begin{minipage}{0.5\linewidth}
 \includegraphics[scale=0.4]{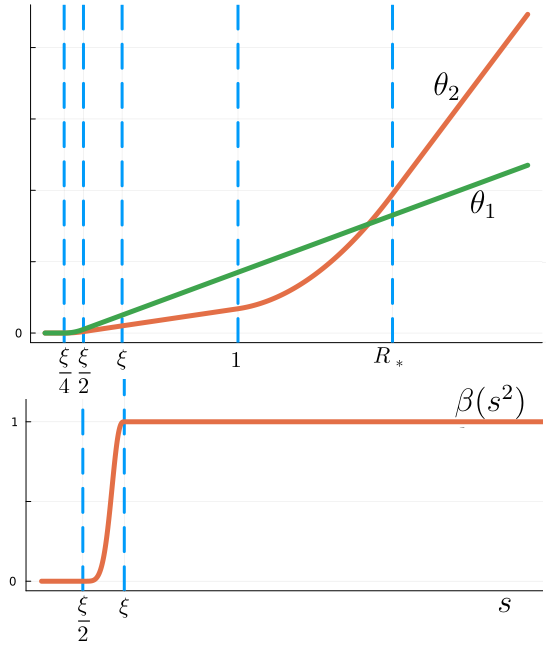}
    \end{minipage}
        \begin{minipage}{0.4\linewidth}
 \includegraphics[scale=0.35]{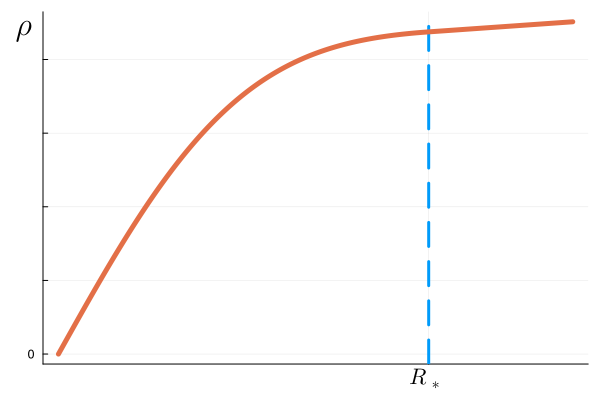}
    \end{minipage}
    \caption{\label{figure1} Left top: graphs of $\theta_1$ (green) and $\theta_2$ (red). Left bottom: graph of $s\mapsto \beta(s^2)$. Right: graph of $\rho$. }
\end{figure}
As will appear in the computations, the important points are the following: the derivatives of $\theta_1$ and $\theta_2$ have to be $0$ at $0$ (for $r\mapsto \theta_i(\sqrt{r})$ to be $\mathcal C^2$). As a consequence, between $\xi/4$ and $\xi/2$, $\theta_2''$ is very large (when $\xi$ is small, since $\theta_2'$ has to go from $0$ to some fixed value $m>0$ in a small interval), and thus it is important that $\beta=0$ in this region (due to the term $\beta_t^2 \theta_2''(y_t)$ in $\Psi_t$). Moreover, the balance between $\theta_1$ and $\theta_2$ has to switch at some point: when $y_t=|g_t-g_t'|$ is too small for the noise to intervene and $x_t=|v_t-v_t'|$ is the main contribution of the distance between the two processes (which can only happen for small distances since $x_t$ is bounded), the term $-g_L \theta_1'(x_t)x_t$ has to control $\theta_2'(y_t) \gamma \|G'\|_\infty x_t$, so that $\theta_2'(y_t)/\theta_1'(x_t)$ has to be sufficiently small. On the contrary, when $y_t$ prevails, the term $-\gamma \theta_2'(y_t) y_t$ has to control $\theta_1'(x_t)y_t(V_E-V_L)$, and thus $\theta_2'(y_t)/\theta_1'(x_t)$ has to be large enough. Finally, in order to exploit the term $\beta_t^2 \rho''(R_t)$ in $\Psi_t$, $\rho$ has to be sufficiently concave -- while still remaining non-decreasing -- up to $R_*$ (after which the contraction is ensured by the deterministic drift, so that $\rho$ can grow linearly, ensuring the equivalence with the standard distance).

Let us now state our precise choices for the functions $\theta_1, \theta_2$ and $\rho$.  Assume that $\theta_1'(r) \in [0,1]$ for all $r\geqslant 0$ and that $\theta_1'(r) = 1$ for all $r\geqslant \xi$. Let
\begin{equation}\label{eq:parametre_g2}
m = \frac{g_L}{2 \gamma  \|G'\|_\infty }\,,\quad M = 4 \frac{V_E-V_L }{\gamma}\,,\quad R_* =\max\po  2 \|G'\|_\infty (V_E-V_L),2+\xi\pf\,.
\end{equation}
Assume that $M>m$ (otherwise Theorem~\ref{thm:synchrone} applies and Theorem~\ref{thm:miroir} follows). Assume that $\theta_2'(r) \in [0,m]$ for $r\in[0,\xi/2]$,  that $\theta_2'(r) = m$ for all $r\in[\xi/2,1]$  (in particular $\theta_2''(r)=0$ on this interval),  that $\theta_2''(r) \in[0,M-m]$  for $r\in[1,R_*]$ and 
$\theta_2'(r) = M$ 
for $r\geqslant R_*$ (so that $\theta_2''(r)=0$ on this interval). Let
\begin{equation}\label{eq:k}
k =   \frac{1}{8a^2 m^2}\po 1+ \frac{1}m\po V_E-V_L +  4a^2 (M-m)  \pf  +  m     \gamma  \|G'\|_\infty\pf \,.  
\end{equation}
 Assume that $\rho(0)=0$, $\rho'(0)=1$, $\rho''(r) = - 2kr \rho'(r)$ for all $r\in[0,R_*]$ and $\rho''(r) = 0$ for all  $r\geqslant R_*+\xi$, with  $\rho''(r) \leqslant 0$ for all $r\geqslant 0$ and $\rho'(r) \in [e^{-k R_*^2 - \xi},1]$ for all $r\geqslant 0$.

First, notice that $\rho(r) \leqslant r$, $\theta_1(r) \leqslant r$ and $\theta_2(r) \leqslant Mr$, so that
\[\rho(R_t) \leqslant R_t \leqslant  x_t+My_t\,.\]
Conversely, $\rho(r) \geqslant e^{-k R_*^2 - \xi}r$, $\theta_1(r) \geqslant r-\xi$ and $\theta_2(r)\geqslant mr - \xi$, so that
\[ e^{k R_*^2 + \xi} \rho(R_t) \geqslant R_t  \geqslant x_t+ m y_t-2\xi \,.\]

Let us prove that, whatever the values of $x_t,y_t\geqslant 0$,
\begin{equation}\label{eq:psi_t_lambda}
\Psi_t \leqslant - \lambda e^{-\xi } \rho(R_t) + C \xi\,,
\end{equation}
for some constants $C,\lambda >0$ independent from $\xi$. We distinguish different cases.

\begin{itemize}
\item When $y_t\leqslant \xi$, we have $\beta_t^2 \theta_2''(y_t)= 0$, $\theta_2'(y_t)\leqslant m$ and $\beta_t^2 \rho''(R_t) \leqslant 0$, hence
\begin{eqnarray*}
\Psi_t &= & \rho'(R_t) \co  \theta_1'(x_t) \co- g_L x_t + y_t(V_E-V_L) \cf +  \theta_2'(y_t)     \gamma\po \|G'\|_\infty x_t-y_t\pf    \cf  \\
&\leqslant & \rho'(R_t) x_t \co   - g_L   \theta_1'(x_t)+  m    \gamma  \|G'\|_\infty    \cf + \xi (V_E-V_L) \,.
\end{eqnarray*}
\begin{itemize}
\item If $x_t\geqslant \xi$, it yields
\begin{eqnarray*}
\Psi_t 
&\leqslant & \rho'(R_t) x_t \co   - g_L   +  m    \gamma  \|G'\|_\infty    \cf + \xi (V_E-V_L) \\
& \leqslant & - \frac{g_L}2 x_t \rho'(R_t) + \xi (V_E-V_L)
\end{eqnarray*}
thanks to the definition \eqref{eq:parametre_g2} of $m$. Since $x_t\geqslant\xi \geqslant y_t$, we get $R_t \leqslant x_t(1+M)$ and
\[\Psi_t \leqslant  - \frac{g_L}{2+2M} R_t \rho'(R_t) + \xi (V_E-V_L) \leqslant  - \frac{g_L}{2+2M} e^{-k R_*^2 - \xi} \rho(R_t) + \xi (V_E-V_L)\,.  \]
\item If  $x_t\leqslant \xi$, it gives
\begin{eqnarray*}
\Psi_t 
&\leqslant & \xi   m    \gamma  \|G'\|_\infty    + \xi (V_E-V_L) \,.
\end{eqnarray*}
Besides, $\rho(R_t) \leqslant (1+M)\xi$ so that, for any $\lambda$ independent from $\xi$,
\[\Psi_t + \lambda \rho(R_t) \leqslant C \xi\]
for some $C>0$ independent from  $\xi$.
\end{itemize}
\item When $y_t\in[\xi,1]$, $\beta_t=1$. Using that $\theta_1'\leqslant 1$, $\theta_2'(y_t)=m$, $\rho''(r) = -2kr \rho'(r)$,
\begin{eqnarray*}
\Psi_t &\leqslant  & \rho'(R_t) \co   y_t(V_E-V_L) +  m     \gamma  \|G'\|_\infty x_t   \cf - 8 k a^2  m^2 R_t \rho'(R_t)  \\
 &\leqslant  & \rho'(R_t)R_t  \co   \frac{V_E-V_L}m +  m     \gamma  \|G'\|_\infty    - 8 k a^2  m^2 \cf   + 2\xi  \po \frac{V_E-V_L}m +  m\gamma  \|G'\|_\infty \pf \\
 & \leqslant & - \rho'(R_t)R_t + C\xi\\
 & \leqslant & - e^{-k R_*^2 - \xi} \rho(R_t) + C\xi
\end{eqnarray*}
for some $C>0$ independent from $\xi$ thanks to the choice \eqref{eq:k} of $k$.
\item When $y_t\in[1,R_*]$, the only difference with the previous case is that $\theta_2''(y_t)=0$. Using that $g''(y_t)\leqslant M-m$, $g'(y_t) \in [m,M]$ and $y_t\geqslant 1$, 
\begin{eqnarray*}
\Psi_t & \leqslant  & \rho'(R_t) \co  y_t(V_E-V_L)  +  M     \gamma  \|G'\|_\infty x_t   +4a^2 (M-m)  \cf + 4 a^2 m^2 \rho''(R_t) \\
 & \leqslant  & \rho'(R_t) \co  y_t\po V_E-V_L +  4a^2 (M-m)\pf   +  M     \gamma  \|G'\|_\infty x_t   \cf + 4 a^2 m^2 \rho''(R_t)  \,.
\end{eqnarray*}
So we are exactly in the same case as before up to a slight change of constant, but again the choice $k$ in \eqref{eq:k} ensures that 
\[\Psi_t \leqslant  - e^{-k R_*^2 - \xi} \rho(R_t) + C\xi\] 
for some $C>0$ independent from $\xi$.
\item When $y_t\geqslant R_*$, we have $\theta_2'(y_t)=M$ hence, using that $\rho''\leqslant 0$  and  $x_t\leqslant V_E-V_L$,
\begin{eqnarray*}
\Psi_t &\leqslant  & \rho'(R_t) \co y_t(V_E-V_L)  +  M    \gamma\po \|G'\|_\infty (V_E-V_L)-y_t\pf  \cf  \\
& \leqslant & - \frac{M \gamma y_t}{4} \rho'(R_t) 
\end{eqnarray*} 
thanks to the choices of $M$ and $R_*$ in \eqref{eq:parametre_g2}. Moreover,
\[y_t \geqslant \frac{y_t+R_*}2 \geqslant \frac{y_t + x_t R_*/(V_E-V_L)}{2} \geqslant \min\po \frac1{2M}, \frac{R_*}{2(V_E-V_L)}\pf  R_t\,.\]
Using again that $R_t \rho'(R_t) \geqslant e^{-k R_*^2 - \xi} \rho(R_t)$, we thus get
\[\Psi_t \leqslant - \frac{\gamma }{8}\min\po 1, \frac{M R_*}{V_E-V_L}\pf   e^{-k R_*^2 - \xi} \rho(R_t)\,.\] 
\end{itemize}
This concludes the proof of \eqref{eq:psi_t_lambda}, with
\[\lambda = \min\po 1,\frac{g_L}{2+2M},\frac{\gamma }{8}, \frac{\gamma M R_*}{8(V_E-V_L)}\pf   e^{-k R_*^2} \,. \]
This implies that 
\[\red{\frac{\dd}{\dd t}}  \mathbb E\po \rho(R_t)\pf \leqslant - \lambda e^{-\xi} \mathbb E\po \rho(R_t)\pf + C\xi\,.\]
Hence,
\begin{eqnarray*}
\mathbb E\po |v_t - v_t'| + m|g_t-g_t'|\pf & \leqslant & \red{e^{kR_*^2+\xi}}\mathbb E \po \rho(R_t)\pf + 2\xi  \\
& \leqslant &  \red{e^{kR_*^2+\xi}}e^{-\lambda e^{-\xi} t}\, \mathbb E \po \red{\rho}(R_0)\pf + \po  \frac{C \red{e^{kR_*^2+2\xi}}}{\lambda}+2\pf \xi \\
&\leqslant  & \red{e^{kR_*^2+\xi}}e^{-\lambda e^{-\xi} t} \,\mathbb E\po |v_0 - w_0'| + M|g_0-g_0'|\pf + \po  \frac{C \red{e^{kR_*^2+2\xi}}}{\lambda}+2\pf \xi\,.
\end{eqnarray*}
Taking initial conditions distributed according to an optimal coupling for  $\mathcal W_1$, we get
\[\mathcal W_1(\nu P_t,\mu P_t) \leqslant \red{C'}\frac{\max(1,M)}{\min(1,m)} e^{-\lambda e^{-\xi} t} \mathcal W_1(\nu,\mu) + C'\xi\]
for some $C'>0$ independent from $\xi\red{\in(0,1]}$. Yet, $\mathcal W_1(\nu P_t,\mu P_t) $ for $t\geqslant 0$ does not depend on  $\xi$ so we let $\xi$ vanish to conclude \red{the proof of \eqref{eq:contractW_1}}.

The existence of an invariant measure is implied by the fact that, for $t$ large enough, $P_t$ is a contraction of $(\mathcal P_1,\mathcal W_1)$, which is a complete space. Hence, for a given $t>0$, $P_t$ has a unique fixed point $\mu_*$ in $\mathcal P_1$, and the semigroup property $P_sP_t=P_{t+s}=P_t P_s$ implies that this fixed point is in fact an invariant measure of $P_s$ for all $s\geqslant 0$. To see that $\mu_*$ has in fact all moments finite, consider for $k\geqslant 2$ the observable $\varphi_k(v,g) = |g|^k$. It is clear that
\[\red{\frac{\dd}{\dd t}} \mathbb E  \po \varphi_k(v_t,g_t) \pf \leqslant - \delta \mathbb E \po \varphi_k(v_t,g_t) \pf  + C\]
for some $C,\delta>0$, from which $\mathbb E_{f_0} \po \varphi_k(v_t,g_t) \pf  \leqslant  e^{-\delta t} \mathbb E_{f_0} \po \varphi_k(v_0,g_0) \pf + C/\delta  $ for all initial condition $f_0$. Letting $t\rightarrow \infty$ gives finite moments for $\mu_*$. To get the long-time convergence in $\mathcal W_q$ for $q>1$, notice that for two random variables $X,Y$, from Hölder's and Minkowski's inequalities, for any $\alpha\in(0,1)$
\begin{eqnarray*}
\mathbb E \po |X-Y|^q \pf &\leqslant &\co \mathbb E \po |X-Y| \pf\cf^{\alpha} \co \mathbb E \po |X-Y|^{\frac{q-\alpha}{1-\alpha}} \pf\cf^{1-\alpha } \\ &\leqslant& \co C_{q,\alpha}\mathbb E \po |X-Y| \pf\cf^{\alpha} \po \co \mathbb E \po |X|^{\frac{q-\alpha}{1-\alpha}} \pf\cf^{1-\alpha } + \co \mathbb E \po |Y|^{\frac{q-\alpha}{1-\alpha}} \pf\cf^{1-\alpha }\pf 
\end{eqnarray*}
for some $C_{q,\alpha}$. From this, in our case, we get that for any probability measures $\nu,\nu'$,
\[\mathcal W_q(\nu,\nu') \leqslant C_{q,\alpha}'\mathcal W_1^{\alpha/q}(\nu,\nu') \po 1+\nu\po \varphi_{(q-\alpha)/(1-\alpha)} \pf + \nu' \po \varphi_{(q-\alpha)/(1-\alpha)} \pf\pf  \]
for some $C_{q,\alpha}'>0$ (we used that $(1-\alpha)/q \leqslant 1$ for simplicity). Noticing that $(q-\alpha)/(1-\alpha)$ may be taken arbitrarily close to $q$ by taking $\alpha$ small enough and using the previous uniform in time moment bounds, we get that for any $\nu\in\mathcal P_p$ for some $p>q$,
\[\mathcal W_q(\nu P_t,\mu_*) \leqslant C e^{-\alpha \lambda t /q }\]
for some $C,\alpha >0$ independent from $t$.

Finally, for any initial condition $\nu$ (not necessarily with moments), we can decompose $\nu = \nu \1_{\mathrm B(n)} + \nu \1_{\mathrm B(n)^c} $ where $\mathrm B(n)$ is the ball of $\R^2$ centered at the origin with radius $n$.  From the previous results, writing $\nu_n = \nu \1_{\mathrm B(n)} / \nu(\mathrm{B}(n)) $ the law of the initial condition conditioned to be in $\mathrm B(n)$, we know that $\nu_n P_t \rightarrow \mu_*$ as $t\rightarrow \infty$, and on the other hand $\nu(\mathrm{B}(n)) \rightarrow 1$ as $n\rightarrow \infty$, which concludes the proof of Theorem~\ref{thm:miroir}.

\subsection{Fokker-Planck interpretation of the coupling}
Although our proofs rely on the probabilistic couplings of Section~\ref{sec:couplings}, let us briefly and informally discuss their  PDE counterpart, that is the Fokker-Planck equation for the law $\Pi_t$ of $(v_t, v_t', g_t, g_t')$. In order to determine the equation on $\Pi_t$, one can use Itô's formula to get, for a test function $\varphi\equiv \varphi(v,v',g,g',t)$ that
\begin{multline*}
\varphi(v_t, v_t', g_t, g_t',t) = \varphi(v_0, v_0', g_0, g_0', 0) + \int_0^t (\partial_s+\mathcal{L}) \varphi(v_s, v_s', g_s, g_s',s) \dd s\\ + 
\int_0^t \partial_g\varphi(v_s, v_s', 0, g_s',s) \dd L_s + 
\int_0^t \partial_{g'}\varphi(v_s, v_s', g_s, 0,s) \dd L_s'\\ + 
\int_0^t \sqrt{2}a\alpha_s(\partial_g+\partial_{g'})\varphi(v_s, v_s', g_s, g_s',s)\dd B_s+ 
\int_0^t \sqrt{2}a\beta_s(\partial_g-\partial_{g'})\varphi(v_s, v_s', g_s, g_s',s)\dd B_s'.
\end{multline*}
with $\mathcal{L}$ the differential operator
\begin{multline*}
\mathcal{L} \varphi = \po g_L \po V_L - v \pf + g\po V_E-v\pf \pf \partial_v \varphi+ \gamma \po G(v) - g\pf  \partial_g\varphi \\
\po g_L \po V_L - v' \pf + g'\po V_E-v'\pf \pf \partial_{v'} \varphi+ \gamma \po G(v') - g'\pf  \partial_{g'}\varphi\\
+ a^2 \partial_{gg} \varphi + 2a^2(\alpha^2(|g-g'|^2)-\beta^2(|g-g'|^2)) \partial_{gg'} \varphi+ a^2 \partial_{g'g'} \varphi.
\end{multline*}
Let us restrict to test functions $\varphi\in C^{1,2}_b([0,T]\times\bar{\Omega})$ with $\Omega =  (V_L,V_E)^2\times(0,\infty)^2$ such that $\partial_g\varphi(g=0) = \partial_{g'}\varphi(g'=0) = 0$, in order to get rid of the local time  contributions. By taking expectations in the formula above one obtains 
\[
\int_\Omega \varphi(T)\dd\Pi_T = \int_\Omega \varphi(0)\dd\Pi_0 + \int_0^T\int_\Omega(\partial_s + \mathcal{L})\varphi(s)\dd\Pi_s.
\]
After formal integration by parts, one recovers the strong form of the Fokker-Planck equation for the coupling
\begin{multline*}
\partial_t\Pi_t + \partial_v \po \co g_L \po V_L - v \pf + g\po V_E-v\pf\cf\Pi_t \pf + \gamma \partial_g\po\co G(v) - g\cf\Pi_t\pf   \\ 
\partial_{v'} \po \co g_L \po V_L - v' \pf + g'\po V_E-v'\pf\cf\Pi_t \pf + \gamma \partial_{g'}\po\co G(v') - g'\cf\Pi_t\pf \\
- a^2 \partial_{gg}\Pi_t - 2a^2 \partial_{gg'}\po\co\alpha^2(|g-g'|^2)-\beta^2(|g-g'|^2)\cf \Pi_t\pf- a^2 \partial_{g'g'} \Pi_t =0,
\end{multline*}
and at the boundaries, on the one hand,
\[
\Pi_t = 0 \text{ when } v = V_E \text{ or } v = V_L \text{ or } v' = V_L \text{ or } v' = V_E,
\]
and, on the other hand,
\[
\begin{array}{lcll}
 \gamma G(v) \Pi_t-a^2\partial_g\Pi_t - 2a^2\partial_{g'}\co\po\alpha^2(|g'|^2)-\beta^2(|g'|^2)\pf\Pi_t\cf&=&0&\text{ when }g = 0,  \\
 \gamma G(v')\Pi_t-a^2\partial_{g'}\Pi_t - 2a^2\partial_{g}\co\po\alpha^2(|g|^2)-\beta^2(|g|^2)\pf\Pi_t\cf&=&0&\text{ when } g' = 0,
\end{array}
\]
and
\[
(\alpha^2(0)-\beta^2(0))\Pi_t = 0 \text{ when } g=g'=0.
\]
Formally, one can check that the marginals follow the original Fokker-Planck equation \eqref{eq:MorrisLecar}. To conclude this section let us stress again that this PDE was not used in our proofs, which are based on the SDE representation of the couplings. The rigorous justification of these computations is unclear because of the combination of boundary reflection and coupling, which can in some cases create density in non-trivial subspaces even when the initial distribution is smooth (see \cite{burdzy2006synchronous} for instance).

\section{Interacting neuron networks}\label{sec:extensions}
\subsection{Finite system of neurons}

For $N>1$, write $\bv=(v^1,\dots,v^N)\in[V_L,V_E]^N$, $\bg=(g^1,\dots,g^N) \in \R_+^N$. Consider the process $(\bv_t,\bg_t)_{t\geqslant 0}$ solving
\begin{equation} \label{eq:interactiong-neuron}
\forall i\in\cco 1,N\ccf\,, \quad \left\{\begin{array}{rcl}
\dd v_t^i & = & g_L(V_L-v_t) \dd t + g_t^i(V_E-v_t^i)\dd t\\
\dd g_t^i  & = & \gamma\po G_i(\bv_t)-g_t^i\pf\dd t + \sqrt{2} a \dd B_t^i + \dd L_t^i\,,
\end{array}\right.
\end{equation}
where $B^1,\dots,B^N$ are independent Brownian motions, $L^i$ is the local time at $0$ of $g^i$ and $G_i \in \mathcal C^1([V_L,V_E]^N,(0,\infty))$. 

\begin{theorem}[Wasserstein contraction for networks]\label{thm:mean_field}
Assume that there exists $K>0$ such that $|\partial_{v^i} G_i(\bv)| \leqslant K$ for all $\bv \in [V_L,V_E]^N$, $i\in\cco 1,N\ccf$. Let $M,m,\lambda,k,R_*$ be as in Section~\ref{sec:miroir} (except that $\|G'\|_\infty$ is replaced by $K$). Then, for all $t\geqslant 0$ and all \red{$\mu,\nu\in\mathcal{P}_1([V_L, V_E]^N\times\mathbb{R}_+^N)$},
\[\mathcal  W_{1,\ell_1}(\nu P_t,\mu P_t) \leqslant \frac{\max(1,M)}{\min(1,m)} e^{-(\lambda-\eta) t} \mathcal  W_{1,\ell_1}(\nu ,\mu )\,,\]
where $\mathcal W_{1,\ell_1}$ is the Wasserstein-$1$ distance associated to $\|(\bv,\bg)-(\bw,\bh)\|_1 = \sum_{i=1}^N |v^i-w^i|+|g^i-h^i|$, and
\[\eta =  M\gamma  e^{kR_*^2 } \max_{1\leqslant i\leqslant N} \sum_{j\neq i }^N  \|\partial_{v^j} G_{i}\|_\infty\,.\]
In particular, if $\lambda>\eta$, the process admits a unique invariant measure $\mu_*^N$, whose moments are all finite and which is globally attractive for $\mathcal W_p$ for all $p\geqslant 1$.
\end{theorem}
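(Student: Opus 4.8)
The plan is to repeat, neuron by neuron, the mirror-coupling argument of Section~\ref{sec:miroir}. Given $\nu,\mu\in\mathcal P_1([V_L,V_E]^N\times\R_+^N)$, couple two solutions $(\bv_t,\bg_t)$ and $(\bv_t',\bg_t')$ of \eqref{eq:interactiong-neuron} by applying, for each $i$, the regularized mirror coupling of Section~\ref{sec:miroir} to the pair $(g^i_t,(g^i_t)')$ --- driven by the original Brownian motion $B^i_t$ and an auxiliary independent one, with $\alpha^i_t=\alpha(|g^i_t-(g^i_t)'|^2)$, $\beta^i_t=\beta(|g^i_t-(g^i_t)'|^2)$ and the same cut-off parameter $\xi$. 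By Lévy's characterization both marginals solve \eqref{eq:interactiong-neuron}, and the system is a well-posed reflected SDE on $[V_L,V_E]^N\times\R_+^N$ for the same reasons as in the scalar case. Set $x^i_t=|v^i_t-(v^i_t)'|$, $y^i_t=|g^i_t-(g^i_t)'|$, let $\theta_1,\theta_2,\rho$ be the functions of Section~\ref{sec:miroir} with $K$ in place of $\|G'\|_\infty$, and define $R^i_t=\theta_1(x^i_t)+\theta_2(y^i_t)$ and $D_t=\sum_{i=1}^N\rho(R^i_t)$. The elementary bounds of Section~\ref{sec:miroir}, applied coordinatewise, give $\min(1,m)\,e^{-kR_*^2-\xi}\,\|(\bv_t,\bg_t)-(\bv_t',\bg_t')\|_1 - 2N\xi\le D_t\le \max(1,M)\,\|(\bv_t,\bg_t)-(\bv_t',\bg_t')\|_1$.

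Next I would apply Itô's formula to each $\rho(R^i_t)$ exactly as in Section~\ref{sec:miroir}. The only new feature is that the drift difference in the $i$-th conductance now involves $\gamma\big(G_i(\bv_t)-G_i(\bv_t')\big)$; since the scalar argument uses only the inequality $|G(v_t)-G(v_t')|\le\|G'\|_\infty x_t$, here one uses $|G_i(\bv_t)-G_i(\bv_t')|\le K x^i_t+\sum_{j\ne i}\|\partial_{v^j}G_i\|_\infty x^j_t$, the "diagonal" part $K x^i_t$ playing precisely the role of $\|G'\|_\infty x_t$. Hence the whole case analysis of Section~\ref{sec:miroir} carries over verbatim and yields $\mathbb E\big[\mathrm d\rho(R^i_t)\big]\le\big(-\lambda e^{-\xi}\rho(R^i_t)+C\xi\big)\,\mathrm dt+\rho'(R^i_t)\theta_2'(y^i_t)\gamma\sum_{j\ne i}\|\partial_{v^j}G_i\|_\infty x^j_t\,\mathrm dt$, with $C,\lambda$ as there and the local-time contributions non-positive because $\theta_2$ is non-decreasing. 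Summing over $i$, using $\rho'\le 1$, $\theta_2'\le M$ and $x^j_t\le e^{kR_*^2+\xi}\rho(R^j_t)+\xi$, the extra terms contribute at most $\eta'\,\mathbb E[D_t]+C''\xi$ with $\eta'\to\eta$ as $\xi\to0$, $\eta$ being the constant of the statement. Thus $\frac{\mathrm d}{\mathrm dt}\mathbb E[D_t]\le-(\lambda-\eta')\mathbb E[D_t]+C''\xi$; Grönwall's lemma, optimizing the initial coupling for $\mathcal W_{1,\ell_1}$, and then letting $\xi\to0$ (the left-hand side being independent of $\xi$) give the announced contraction with prefactor $\max(1,M)/\min(1,m)$.

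When $\lambda>\eta$, the remaining assertions follow exactly as in the proof of Theorem~\ref{thm:miroir}: for $t$ large $P_t$ is a strict contraction of the complete space $(\mathcal P_1([V_L,V_E]^N\times\R_+^N),\mathcal W_{1,\ell_1})$, so it has a unique fixed point, which the semigroup identity forces to be invariant for all $P_s$; the Lyapunov function $(\bv,\bg)\mapsto\sum_i|g^i|^k$ satisfies $\partial_t\mathbb E[\sum_i|g^i_t|^k]\le-\delta\mathbb E[\sum_i|g^i_t|^k]+C$ for every $k$, giving uniform-in-time moment bounds and hence all moments of $\mu_*^N$ finite; the Hölder--Minkowski interpolation between $\mathcal W_1$ and higher moments upgrades the convergence to $\mathcal W_p$ for all $p\ge1$; and the conditioning-on-a-ball argument extends global attractivity to arbitrary initial laws.

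The main obstacle is the bookkeeping of the cross-interaction terms in the second step: because $x^j_t$ with $j\ne i$ is not controlled by $\rho(R^i_t)$, the terms $\gamma\theta_2'(y^i_t)\sum_{j\ne i}\|\partial_{v^j}G_i\|_\infty x^j_t$ can only be absorbed after summing over $i$, which is exactly why the rate loss is the aggregate quantity $\eta$ and why the smallness condition $\lambda>\eta$ appears. One must also double-check that nothing in the delicate choice of $\theta_1,\theta_2,\rho$ and in the case analysis of Section~\ref{sec:miroir} uses more about $G$ than the Lipschitz bound $|G(v)-G(v')|\le\|G'\|_\infty|v-v'|$, so that replacing $\|G'\|_\infty$ by $K$ on the diagonal is harmless; everything else is a routine repetition of the $N=1$ arguments.
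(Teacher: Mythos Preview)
Your proposal is correct and follows essentially the same approach as the paper: a particle-wise regularized mirror coupling, the same functions $\theta_1,\theta_2,\rho$ (with $K$ in place of $\|G'\|_\infty$), the decomposition of $\Psi_t^i$ into a ``diagonal'' part handled verbatim by the case analysis of Section~\ref{sec:miroir} and a cross term bounded via $\rho'\leqslant 1$, $\theta_2'\leqslant M$, $x_t^j\leqslant e^{kR_*^2+\xi}\rho(R_t^j)+\xi$, then summation over $i$, Gr\"onwall, and $\xi\to 0$. You are in fact slightly more explicit than the paper about the $\xi$-corrections and about the concluding steps (invariant measure, moments, $\mathcal W_p$ attractivity), which the paper dispatches with ``Conclusion follows as in the case $N=1$''.
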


\begin{remark}
The point is that when $K$ is independent from $N$, so is $\lambda$.
\end{remark}

\begin{proof}
We consider a particle-wise mirror coupling  as in Section~\ref{sec:miroir}, in the sense that we consider independent Brownian motions $B^i,B'^i$ for all $i\in\cco 1,N\ccf$ and we let $(v^i,g^i,v'^i,g'^i)$ solve \eqref{eq:couple_def}, driven by these Brownian motions, with $\alpha,\beta$ as in Section~\ref{sec:miroir}, except that $G(v)$ is replaced by $G_i(\mathbf{v})$. We also consider $\theta_1,\theta_2,\rho$ some non-negative non-decreasing $\mathcal C^2$ functions and let $x^i_t=|v_t^i - v_t'^i|$, $y^i_t = |g_t^i - g_t'^i|$, $R_t^i = \theta_1(x^i_t) + \theta_2(y^i_t)$. Then, as in Section~\ref{sec:miroir},
\begin{eqnarray*}
\partial_t \mathbb E \po \rho (R_t^i) \pf   & \leqslant  & \mathbb E\po \Psi_t^i\pf
\end{eqnarray*}
with 
\begin{eqnarray*}
\Psi_t^i &= & \rho'(R_t^i) \Bigg{[}  \theta_1'(x^i_t) \co- g_L x^i_t + y^i_t(V_E-V_L) \cf +  \theta_2'(y^i_t)     \gamma\po \sum_{j\neq i }^N  \|\partial_{v^j} G_{j}\|_\infty x^j_t + K x^i_t -y^i_t\pf   \\
& & +4a^2 \beta_{t,i}^2 \theta_2''(y^i_t)  \Bigg{]} + 4 a^2 \beta_{t,i}^2 \po \theta_2'(y^i_t)\pf^2 \rho''(R_t^i) \\
& := & \hat \Psi_t^i + \theta_2'(y^i_t)  \rho'(R_t^i)    \gamma\sum_{j\neq i }^N  \|\partial_{v^j} G_{j}\|_\infty x^j_t\,,
\end{eqnarray*}
where $\hat \Psi_t^i $ is exactly as in Section~\ref{sec:miroir} with $\|G'\|_\infty$ replaced by $K$. So we take $\rho,\theta_1,\theta_2$ exactly as in Section~\ref{sec:miroir} (with $\|G'\|_\infty$ replaced by $K$), in particular $\hat \Psi_t^i  \leqslant -\lambda e^{-\xi} R_t^i +C\xi$ for an arbitrary $\xi$. Besides, we can bound
\[ \theta_2'(y^i_t)  \rho'(R_t^i)    \gamma\sum_{j\neq i }^N  \|\partial_{v^j} G_{j}\|_\infty x^j_t\leqslant M\gamma  e^{kR_*^2 +\xi}  \sum_{j\neq i }^N  \|\partial_{v^j} G_{j}\|_\infty \rho(R_t^j)\,, \]
 and thus,
\[\partial_t \sum_{i=1}^N \mathbb E \po \rho\po R_t^i\pf\pf \leqslant - \po \lambda e^{-\xi}-\eta e^{\xi}\pf \sum_{i=1}^N \mathbb E \po \rho\po R_t^i\pf\pf +C\xi \,.\] 
Conclusion follows as in the case $N=1$.
\end{proof}

\subsection{Non-linear limit in the  mean-field case}

As a simple example of interacting neurons (see \cite{BoFaTa, delarue2015global} for more realistic mean field limits, or \cite{blaustein2023concentration} and references therein for spatially extended mean-field interactions), we consider the case where
\begin{equation}\label{eq:G_iG}
G_i(\bv) = H_0(v^i) + \frac1{N-1} \sum_{j\neq i} H_1(v^i,v^j)
\end{equation}
for some functions $H_0\in \mathcal C^1([V_L,V_E],(0,\infty)),H_1\in\mathcal C^1([V_L,V_E]^2,(0,\infty))$.  In that case\red{, 
\begin{equation}
\label{eq:Gi1}
|\partial_{v^i} G_i(\bv)| \leqslant |H_0'(v^i)| + \frac1{N-1} \sum_{j\neq i} |\partial_{v^i} H_1(v^i,v^j)| \leqslant \|H'_0\|_\infty + \|\nabla H_1\|_\infty\,,
\end{equation}
and, for $j\neq i$,
\begin{equation}
\label{eq:Gi2}
|\partial_{v^j} G_i(\bv)| = \frac1{N-1}  |\partial_{v^j} H_1(v^i,v^j)| \leqslant \frac1{N-1} \|\nabla H_1\|_\infty\,,
\end{equation}
from which $K$ and $\eta$ in Theorem~\ref{thm:mean_field} can be taken independent from $N$.
}

 The law of the first neuron is expected to converge as $N\rightarrow \infty$ to the solution of the non-linear equation
\begin{equation}\label{eq:MorrisLecar_non-linear}
\left\{\begin{array}{l}
\partial_t f + \partial_v \co \po g_L \po V_L - v \pf + g\po V_E-v\pf \pf f\cf \\
\hspace{2cm}+ \gamma \partial_g \co\po H_0(v)+ H_1\star f(v) - g\pf f \cf- a \partial_{gg} f = 0\\
f(V_E,g,t) = f(V_L,g,t) = 0\\
 \po H_0(v)+H_1\star f(v) - g\pf f - a \partial_{g} f = 0\qquad \text{at }g=0\,,
\end{array}\right.
\end{equation}
where
\[H_1\star f(v) = \int_{[V_L,V_E]\times\R_+} H_1(v,w) f(w,h)\dd w\dd h\,.\]
The well-posedness of the system  \eqref{eq:MorrisLecar_non-linear} for $f_0 \in\mathcal P_1([V_L,V_E]\times\R_+)$ follows from a classical fixed-point argument, using again a synchronous coupling to see that the solution is Lipschitz in terms of the non-linearity. We do not detail this and refer to e.g. the similar proof of  \cite[Propostion 1.1]{duong2023reducing}. 

Similarly, propagation of chaos, i.e. convergence of the joint law of a fixed number of neurons of \eqref{eq:interactiong-neuron} toward independent solutions of the non-linear system \eqref{eq:MorrisLecar_non-linear}, is classically obtained using  a synchronous coupling: 

\begin{proposition}[Propagation of chaos]\label{prop:chaos}
Let $a\geqslant 0$, $0<V_L < V_E$, $\gamma,g_L>0$, $H_0\in \mathcal C^1([V_L,V_E],(0,\infty))$, $H_1\in \mathcal C^1([V_L,V_E]^2,(0,\infty))$ 
and $f_0\in\mathcal P_1([V_L,V_E]\times\R_+)$. There exist $C_0,c>0$ such that the following holds. For all integer $N\geq2$,
let $(\mathbf{v},\mathbf{g})$ be a solution of \eqref{eq:interactiong-neuron} in the mean-field case \eqref{eq:G_iG} with initial condition distributed according to $f_0^{\otimes N}$. For  $k\leqslant N$, Denote by $f_t^{(k,N)}$ the law of $(v^1_t,g^1_t,\dots,v^k_t,g^k_t)$. Consider $\bar f$ the solution of \eqref{eq:MorrisLecar_non-linear} with initial condition $f_0$. Then, for all $t\geqslant 0$,
\[\mathcal W_2 \po f_t^{(k,N)}, \bar f_t^{\otimes k}\pf \leqslant \frac{\sqrt{k}}{\sqrt{N}} C_0 e^{ct}\,.\]
\end{proposition}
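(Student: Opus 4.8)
The plan is to prove Proposition~\ref{prop:chaos} by a standard synchronous coupling argument combined with a McKean--Vlasov-type estimate, exploiting that in the mean-field case \eqref{eq:G_iG} the interaction is uniformly Lipschitz and the diffusion is additive. First I would introduce, on the same probability space, the particle system $(\mathbf v_t,\mathbf g_t)$ solving \eqref{eq:interactiong-neuron} with $G_i$ given by \eqref{eq:G_iG}, and $N$ independent copies $(\bar v^i_t,\bar g^i_t)$ of the nonlinear process solving \eqref{eq:MorrisLecar_non-linear}, \emph{driven by the same Brownian motions $B^i$ and with the same initial conditions $(v^i_0,g^i_0)$ distributed as $f_0^{\otimes N}$}, and with their reflection local times at $0$ coupled as in Section~\ref{sec:couplings} (synchronous reflection). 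Set $x^i_t = |v^i_t-\bar v^i_t|$, $y^i_t=|g^i_t-\bar g^i_t|$. The goal is a Gr\"onwall estimate on $e_t := \frac1N\sum_{i=1}^N \mathbb E\big[(x^i_t)^2 + (y^i_t)^2\big]$.

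Next I would apply It\^o's formula as in \eqref{eq:ito1}--\eqref{eq:ito2} (with $f_1=f_2=\mathrm{id}$), noting that the Brownian and local-time contributions vanish or are non-positive exactly as in the proof of Theorem~\ref{thm:synchrone}: since the noise is synchronous on the $g$-component the martingale part of $\dd(y^i_t)^2$ disappears, and the boundary term is non-positive by convexity. The drift terms are controlled by the Lipschitz bounds: the $v$-equation gives the usual $-2(g_L+g^i_t)(x^i_t)^2 + 2 x^i_t y^i_t(V_E-V_L)$, while in the $g$-equation the difference of the drifts is $\gamma\big(G_i(\mathbf v_t) - g^i_t - H_0(\bar v^i_t) - H_1\star\bar f_t(\bar v^i_t) + \bar g^i_t\big)$. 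The crucial point is to split $G_i(\mathbf v_t) - H_0(\bar v^i_t) - H_1\star \bar f_t(\bar v^i_t)$ into a Lipschitz-in-trajectory part, bounded by $C(x^i_t + \frac1{N-1}\sum_{j\ne i} x^j_t)$ using that $H_0,H_1$ are $C^1$ on a compact set, plus a fluctuation term $\frac1{N-1}\sum_{j\ne i}\big(H_1(\bar v^i_t,\bar v^j_t) - H_1\star\bar f_t(\bar v^i_t)\big)$, which is a sum of i.i.d.\ centered bounded random variables \emph{conditionally on $\bar v^i_t$}, hence has variance $O(1/N)$ after taking expectations and using $\|H_1\|_\infty<\infty$ on the compact domain. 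Collecting terms and using $2ab\le a^2+b^2$ and $\big(\frac1{N-1}\sum_{j\ne i} x^j_t\big)^2 \le \frac1{N-1}\sum_{j\ne i}(x^j_t)^2$ yields $\frac{\dd}{\dd t} e_t \le C e_t + C/N$ for a constant $C$ depending only on $\gamma,g_L,V_L,V_E,\|H_0\|_{C^1},\|H_1\|_{C^1},a$ but not on $N$; since $e_0=0$, Gr\"onwall gives $e_t \le \frac{C}{N}(e^{Ct}-1)/C \le \frac{C_0^2}{N} e^{2ct}$.

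Finally, to pass from the single-particle/empirical bound to $\mathcal W_2\big(f_t^{(k,N)},\bar f_t^{\otimes k}\big)$, I would note that $(\bar v^1_t,\bar g^1_t,\dots,\bar v^k_t,\bar g^k_t)$ is distributed as $\bar f_t^{\otimes k}$ and that the coupling just constructed is an admissible coupling between $f_t^{(k,N)}$ and $\bar f_t^{\otimes k}$, so
\[
\mathcal W_2^2\big(f_t^{(k,N)},\bar f_t^{\otimes k}\big) \le \sum_{i=1}^k \mathbb E\big[(x^i_t)^2+(y^i_t)^2\big] = k\, e_t \le \frac{k}{N} C_0^2 e^{2ct},
\]
using exchangeability of the particles, and taking square roots gives exactly the claimed bound $\mathcal W_2\big(f_t^{(k,N)},\bar f_t^{\otimes k}\big)\le \sqrt{k/N}\, C_0 e^{ct}$. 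The well-posedness of both \eqref{eq:interactiong-neuron} and \eqref{eq:MorrisLecar_non-linear} needed to justify the It\^o computations is already granted in the text (Saisho's theorem for the reflected SDE and the fixed-point argument for the nonlinear equation). The main obstacle is the rigorous handling of the reflection local times under the synchronous coupling --- one must check that $\dd(y^i_t)^2$ picks up $-2\bar g^i_t\,\dd L^i_t - 2 g^i_t\,\dd \bar L^i_t \le 0$ and no unexpected positive contribution, which is precisely the computation \eqref{eq:ito2} with $f_2$ non-decreasing; everything else is a routine Gr\"onwall estimate, and one can, as the authors suggest, simply refer to the analogous argument in \cite{duong2023reducing}.
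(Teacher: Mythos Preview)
Your proposal is correct and follows essentially the same route as the paper: synchronous coupling of the $N$-particle system with $N$ i.i.d.\ copies of the nonlinear process driven by the same Brownian motions and initial data, It\^o's formula as in \eqref{eq:ito1}--\eqref{eq:ito2} with the local-time contribution discarded by sign, the Lipschitz/fluctuation splitting of $G_i(\mathbf v_t)-H_0(\bar v^i_t)-H_1\star\bar f_t(\bar v^i_t)$, an $O(1/N)$ variance bound on the centered fluctuation, Gr\"onwall, and finally exchangeability to pass to the $k$-marginal. One small remark: the local times $L^i$ and $\bar L^i$ are not ``coupled'' in any additional sense---each is simply the Skorokhod local time of its own conductance process---so your phrase ``synchronous reflection'' is superfluous, but the sign argument you give afterwards is exactly the right one.
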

In fact, using the $\mathcal W_1$ contraction of Theorem~\ref{thm:mean_field}, one can get a similar result uniformly in time, i.e. without the $e^{ct}$ term (with $\mathcal W_1$ instead of $\mathcal W_2$ on the left-hand side). However the  basic propagation of chaos result stated here (in finite time) is already sufficient to allow to let $N\rightarrow \infty$ in Theorem~\ref{thm:mean_field}. Before providing the proof of Proposition~\ref{prop:chaos}, let us thus simply state what this yields (using as in the proof of Theorem~\ref{thm:miroir} that $(\mathcal P_1,\mathcal W_1)$ is a complete space to get a unique fixed point in the case $\lambda>\theta$).

\begin{theorem}[Non-linear Wasserstein contraction]\label{thm:mean_field_NL}
Under the assumptions and with the notations of Theorem~\ref{thm:mean_field} in the mean-field case \eqref{eq:G_iG} (in particular with $\lambda,\red{\eta}$ independent from $N$ \red{thanks to \eqref{eq:Gi1} and \eqref{eq:Gi2}}), for any $f_t,f_t'$ (weak) solutions of the non-linear PDE  \eqref{eq:MorrisLecar_non-linear} with $f_0,f_0'\in\mathcal P_1([V_L,V_E]\times\R_+)$ and all $t\geqslant 0$,
\[\mathcal  W_{1}(f_t,f'_t) \leqslant \frac{\max(1,M)}{\min(1,m)} e^{-(\lambda-\theta) t} \mathcal  W_{1}(f_0,f_0')\,.\]
In particular, if $\lambda>\theta$, then \eqref{eq:MorrisLecar_non-linear} admits a unique stationary solution, which is globally attractive in $\mathcal W_1$.
\end{theorem}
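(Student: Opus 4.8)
The plan is to derive the non-linear contraction estimate by exploiting the finite-$N$ Wasserstein contraction of Theorem~\ref{thm:mean_field} together with the propagation of chaos bound of Proposition~\ref{prop:chaos}, letting $N\to\infty$. First I would fix two initial data $f_0,f_0'\in\mathcal P_1([V_L,V_E]\times\R_+)$ and, for each $N$, run two copies of the interacting system \eqref{eq:interactiong-neuron}--\eqref{eq:G_iG} with i.i.d.\ initial conditions distributed according to $f_0^{\otimes N}$ and $(f_0')^{\otimes N}$ respectively. Theorem~\ref{thm:mean_field} gives, for these two particle systems,
\[
\mathcal W_{1,\ell_1}\bigl(\text{Law}(\mathbf v_t,\mathbf g_t),\text{Law}(\mathbf v_t',\mathbf g_t')\bigr) \leqslant \frac{\max(1,M)}{\min(1,m)}\, e^{-(\lambda-\theta)t}\,\mathcal W_{1,\ell_1}\bigl(f_0^{\otimes N},(f_0')^{\otimes N}\bigr)\,,
\]
and since $\mathcal W_{1,\ell_1}$ tensorizes additively for the $\ell_1$ cost, the right-hand side is exactly $\frac{\max(1,M)}{\min(1,m)}e^{-(\lambda-\theta)t}\,N\,\mathcal W_1(f_0,f_0')$. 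Restricting the left-hand side to the first coordinate (a marginal of an $\ell_1$-optimal coupling is a coupling of the one-particle marginals, and $\mathcal W_1$ only decreases under marginalization) yields
\[
\mathcal W_1\bigl(f_t^{(1,N)},(f_t')^{(1,N)}\bigr) \leqslant \frac{\max(1,M)}{\min(1,m)}\, e^{-(\lambda-\theta)t}\,\mathcal W_1(f_0,f_0')\,,
\]
uniformly in $N$, where $f_t^{(1,N)}$ and $(f_t')^{(1,N)}$ are the one-particle laws of the two systems.

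Next I would pass to the limit $N\to\infty$. By Proposition~\ref{prop:chaos} applied with $k=1$ to each of the two systems, $\mathcal W_2\bigl(f_t^{(1,N)},\bar f_t\bigr)\leqslant N^{-1/2}C_0 e^{ct}$ and $\mathcal W_2\bigl((f_t')^{(1,N)},\bar f_t'\bigr)\leqslant N^{-1/2}C_0 e^{ct}$, where $\bar f_t,\bar f_t'$ solve the non-linear PDE \eqref{eq:MorrisLecar_non-linear} with data $f_0,f_0'$. Since $\mathcal W_1\leqslant \mathcal W_2$ on a bounded-in-$v$, finite-second-moment setting (the $v$-component lives in $[V_L,V_E]$ and the $g$-moments are controlled), both one-particle laws converge to the non-linear solutions in $\mathcal W_1$ for every fixed $t$. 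The triangle inequality
\[
\mathcal W_1(\bar f_t,\bar f_t') \leqslant \mathcal W_1\bigl(\bar f_t,f_t^{(1,N)}\bigr) + \mathcal W_1\bigl(f_t^{(1,N)},(f_t')^{(1,N)}\bigr) + \mathcal W_1\bigl((f_t')^{(1,N)},\bar f_t'\bigr)
\]
then gives, after sending $N\to\infty$, the stated bound $\mathcal W_1(\bar f_t,\bar f_t') \leqslant \frac{\max(1,M)}{\min(1,m)}e^{-(\lambda-\theta)t}\mathcal W_1(f_0,f_0')$. A small point to address is that the statement concerns arbitrary weak solutions $f_t,f_t'$ of \eqref{eq:MorrisLecar_non-linear}; here one invokes the well-posedness (uniqueness) of the non-linear system mentioned just before the statement, so that $f_t=\bar f_t$ and $f_t'=\bar f_t'$.

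Finally, for the ``in particular'' clause, when $\lambda>\theta$ the map $f_0\mapsto f_t$ is, for $t$ large enough, a strict contraction on the complete metric space $(\mathcal P_1([V_L,V_E]\times\R_+),\mathcal W_1)$, hence by Banach's fixed point theorem has a unique fixed point $f_\infty$; the semigroup property of the non-linear flow (which follows from uniqueness of solutions) upgrades this to a genuine stationary solution, unique among $\mathcal P_1$ data, and the contraction estimate shows it is globally attractive in $\mathcal W_1$ with exponential rate $\lambda-\theta$. The main obstacle is not any single deep step but rather the bookkeeping in the limiting argument: one must be careful that the constants in Theorem~\ref{thm:mean_field} and in Proposition~\ref{prop:chaos} are genuinely $N$-independent in the mean-field regime \eqref{eq:G_iG} (which is asserted in the text), and that marginalizing the $\ell_1$-optimal coupling and tensorizing the product measures are done with the correct normalization — it is precisely the additive ($\ell_1$) structure of the cost that makes the factor $N$ cancel against the $N$ terms in the sum, leaving a clean one-particle estimate.
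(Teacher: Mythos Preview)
Your approach—deducing the non-linear contraction by passing to the limit $N\to\infty$ in Theorem~\ref{thm:mean_field} via the propagation of chaos of Proposition~\ref{prop:chaos}, then invoking completeness of $(\mathcal P_1,\mathcal W_1)$ for the fixed point—is exactly what the paper intends (the paper does not spell out a separate proof but says the theorem is what Theorem~\ref{thm:mean_field} ``yields'' once one lets $N\to\infty$, using completeness as in Theorem~\ref{thm:miroir}). The only spot to tighten is the marginalization step: merely projecting an $\ell_1$-optimal coupling to the first coordinate gives $\mathcal W_1\bigl(f_t^{(1,N)},(f_t')^{(1,N)}\bigr)\leqslant \mathcal W_{1,\ell_1}(\cdot,\cdot)$, not $\tfrac1N\,\mathcal W_{1,\ell_1}(\cdot,\cdot)$; the factor $1/N$ that cancels the $N$ on the right comes from \emph{exchangeability} of the particle laws (each of the $N$ coordinate contributions to the $\ell_1$ cost is equal, so any single one is at most the average), which you correctly allude to at the end but should invoke explicitly at that point rather than ``$\mathcal W_1$ only decreases under marginalization.''
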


\begin{proof}[Proof of Proposition~\ref{prop:chaos}]
We consider the synchronous coupling of a process $(\mathbf{v},\mathbf{g})$ solving \eqref{eq:interactiong-neuron} and of $N$ independent non-linear processes with law $\bar f_t$, namely, for all $i\in\cco 1,N\ccf$,
\begin{equation*}
\begin{array}{rcl}
\dd v_t^i & = & g_L(V_L-v_t^i) \dd t + g_t^i(V_E-v_t^i)\dd t\\
\dd g_t^i  & = & \gamma\po 
H_0(v_t^i) + \frac1{N-1} \sum_{j\neq i} H_1(v_t^i,v_t^j)
-g_t^i\pf\dd t + \sqrt{2}  a\dd B_t^i + \dd L_t^i\\
\dd v_t'^i & = & g_L(V_L-v_t'^i) \dd t + g_t'^i(V_E-v_t'^i)\dd t\\
\dd g_t'^i  & = & \gamma\po H_0(v_t'^i)+H_1\star \bar f_t(v_t'^i)
-g_t'^i\pf\dd t + \sqrt{2}  a  \dd B_t^i+ \dd  L_t'^i\,,
\end{array}
\end{equation*}
with the same initial conditions distributed according to $f_0^{\otimes N}$. In particular, by Itô calculus, the law of $(v'^i,g'^i)$ solves \eqref{eq:MorrisLecar_non-linear} and is thus equal to $\bar f$, from which
\begin{eqnarray}
\mathcal W_2^2 \po f_t^{(k,N)}, \bar f_t^{\otimes k}\pf & \leqslant &   \mathbb E \po \sum_{i=1}^k |v_t^i-v'^i_t|^2 + |g_t^i-g'^i_t|^2 \pf \nonumber \\
  &=&   \frac kN \mathbb E \po \sum_{i=1}^N |v_t^i-v'^i_t|^2 + |g_t^i-g'^i_t|^2 \pf\,, \label{eq:coupling_propchaos}
\end{eqnarray}
where we used the interchangeability of the neurons.

Similarly to the proof of Theorem~\ref{thm:synchrone}, using in particular that the contribution of the local times is negative and that $H_0,H_1$ are Lipschitz, we get
\begin{eqnarray*}
\frac12 \dd \sum_{j=1}^N \po |v_t^j-v_t'^j|^2 +  |g_t^j-g_t'^j|^2 \pf   &\leqslant & c \sum_{j=1}^N \po |v_t^j-v_t'^j|^2 +  |g_t^j-g_t'^j|^2 \pf \dd t\\
& & + \sum_{j=1}^N \left|H_1\star \bar f_t(v_t'^j) - \frac1{N-1} \sum_{i\neq j} H_1(v_t'^j,v'^i_t)\right|^2 \dd t
\end{eqnarray*}
for some constant $c>0$, where we have bounded
\begin{eqnarray*}
\lefteqn{\left|H_1\star \bar f_t(v_t'^j) - \frac1{N-1} \sum_{i\neq j} H_1(v_t^j,v^i_t)\right|}\\ &\leqslant& \left|H_1\star \bar f_t(v_t'^j) - \frac1{N-1} \sum_{i\neq j} H_1(v_t'^j,v'^i_t)\right| + \frac1{N-1} \sum_{i\neq j} \left| H_1(v_t'^j,v'^i_t)-H_1(v_t^j,v^i_t)\right|  \\
&\leqslant& \left|H_1\star \bar f_t(v_t'^j) - \frac1{N-1} \sum_{i\neq j} H_1(v_t'^j,v'^i_t)\right| + \red{\|H'_1\|_\infty} \po |v_t^j-v'^j_t| + \frac{1}{N-1} \sum_{i\neq j} |v_t^i-v'^i_t|\pf  \,.
\end{eqnarray*}
Using that $v_t'^i$ and $v_t'^j$ are independent when $i\neq j$ and that $H_1\star \bar f_t(w) = \mathbb E[ H_1(w,v'^i_t)]$ for any $w\in[V_L,V_E]$, expanding the square for a fixed $j\in\cco 1,N\ccf$ leads to 
\begin{eqnarray*}
\lefteqn{\mathbb E   \left|H_1\star \bar f_t(v_t'^j) - \frac1{N-1} \sum_{i\neq j} H_1(v_t'^j,v'^i_t)\right|^2}\\
& = & \frac1{(N-1)^2} \sum_{i\neq j} \mathbb E   \left|H_1\star \bar f_t(v_t'^i) - H_1(v_t'^j,v'^i_t)\right|^2   \ \leqslant\   \frac{4\|H_1\|_\infty^2}{N-1}.
\end{eqnarray*}
Using Gronwall's Lemma (and the fact \red{that} the initial conditions are the same for the two processes) thus yields
\[ \sum_{j=1}^N  \mathbb E \po |v_t^j-v_t'^j|^2 +  |g_t^j-g_t'^j|^2 \pf \leqslant \frac{4N}{N-1}\|H_1\|_\infty^2  t e^{ct} \,,\]
 which, thanks to \eqref{eq:coupling_propchaos}, concludes the proof.
 \end{proof}

\section{Conclusion} 

The statistical physics representation of the Morris-Lecar equation is a simplified model for neural assemblies describing the number of neurons with voltage $v$ (a macroscopic variable) and conductance $g$ (a mesoscopic variable). In the deterministic case this system admits various long term behaviours related to the number of stationary states. With a degenerate diffusion on the kinetic variable, we are able to prove exponential convergence to the steady state. Our method uses synchronous or mirror couplings of the related processes. It is flexible and allows us to study the long term behaviour \red{of} $N$ interacting individual neurons as well as their mean field limit.
\\

Several extensions are possible. Firstly, our model is simplified with two conductances, as a consequence periodic solutions of the deterministic model are not observed. Our coupling method should extend without difficulty to more elaborate Morris-Lecar systems, \red{as described in the introduction}. Secondly, we have used a standard mean-field limit as used for particles approximation in kinetic theory. The extension to more realistic mean-field limits leading to Integrate-and-Fire equations, with coupling between neurons through voltage discharges, is an interesting extension. The mathematical analysis of the underlying nonlinear kinetic equations faces difficulties due to boundary conditions that change type and, as here, to degeneracy of the Fokker-Planck equation. Existence of bounded \red{stationary} solutions has been developed in \cite{PeSa, DPSZ23}, numerical analysis in \cite{CCTao}. Also, the derivation from stochastic models of interacting individual neurons has attracted a lot of attention, see \cite{BoFaTa,DELARUE20152451} and references therein.

\subsection*{Acknowledgements}

The research of P.M. is supported by the French ANR grant SWIDIMS (ANR-20-CE40-0022). The research of B.P. has received support from ANR ChaMaNe No: ANR-19-CE40-0024.

\bibliographystyle{plain}
\bibliography{biblio}
\end{document}